\newtheorem*{theorem3.10}{Theorem 3.10}
\newtheorem{theorem}{Theorem}[section]
\newtheorem{lemma}[theorem]{Lemma}
\newtheorem{corollary}[theorem]{Corollary}
\theoremstyle{definition}
\newtheorem{definition}[theorem]{Definition}
\newtheorem{example}[theorem]{Example}
\newtheorem{setting}[theorem]{Setting}
\theoremstyle{remark}
\newtheorem{remark}[theorem]{Remark}
\begin{document}

\title{A Class of Hilbert Series and the Strong Lefschetz Property}

\author{Melissa Lindsey}
\address{Department of Mathematics, Purdue University, West Lafayette, IN 47907-2067}
\email{lindsey9@math.purdue.edu}
\thanks{}

\subjclass[2000]{Primary 13A02; 13C05}

\date{}

\keywords{Hilbert Series, Strong Lefschetz Property}

\commby{Bernd Ulrich}

\begin{abstract}
We determine the class of Hilbert series $\mathcal H$ so that if $M$ is a finitely generated zero-dimensional $R$-graded module with the strong Lefschetz property, then $M\otimes_k k[y]/(y^m)$ has the strong Lefschetz property for an indeterminate $y$ and all positive integers $m$ if and only if the Hilbert series of $M$ is in $\mathcal{H}$. Given two finite graded $R$-modules $M$ and $N$ with the strong Lefschetz property, we determine sufficient conditions in order that $M\otimes_kN$ has the strong Lefschetz property.
\end{abstract}

\maketitle

\section{Introduction}
Let $k$ be a field of characteristic zero, and let $R=k[x_1,\dots,x_n]=\bigoplus_{d\geq 0}R_d$ be a standard graded polynomial ring in $n$ variables over $k$. A graded $R$-module $M=\bigoplus_{d\in \mathbb{Z}}M_d$ is said to have the {\em{strong Lefschetz property}} if there exists a linear form $l\in R_1$ such that the $k$-linear map $l^a:M_d\mapsto M_{d+a}$ has full rank for all nonnegative integers $a$ and $d$. In other words, for each $a$ and $d$, the map $l^a$ is either injective or surjective. In this situation $l$ is called a {\em{strong Lefschetz element}} for $M$.

A number of people have worked on classifying the rings that have the strong Lefschetz property, for example see the work of Harima and Watanabe \cite{HW} and Migliore and Miro-Roig\cite{MM}.  In \cite{HMNW}  a characterization of the Hilbert functions that can occur for $k$-algebras with the strong Lefschetz property is given and in \cite{ZZ} the authors determine which Hilbert functions force the strong Lefschetz property. Stanley used the hard Lefschetz theorem \cite[Lemma 2.3]{S} to show that if an ideal $I\subset R$ is generated by a monomial regular sequence, then $R/I$ has the strong Lefschetz property. Watanabe later gave another proof of the same fact using representation theory \cite[Corollary 3.5]{W}. Reid-Roberts-Roitman \cite[Theorem 10]{RRR} gave the first purely algebraic proof of this result. As a corollary of our work we give another algebraic proof of this result as described in Remark \ref{monomialcompleteintersection}. It is an open question, even in codimension three, as to whether all complete intersections have the strong Lefschetz property. An interesting related question is whether $R/I$ has the strong Lefschetz property when $I\subset R$ is generated by general forms. Fr\"{o}berg's conjecture on the Hilbert function of an ideal generated by general forms follows for $R/I$ if $R/I$ has the strong Lefschetz property, see \cite{F}, \cite[Conjecture A]{P2} and \cite[Conjecture 1.2]{CP}.

Two other invariants of an Artinian local ring $A$ with maximal ideal $\mathfrak{m}$ are its \em{Dilworth number}, $\max\{\mu({\mathfrak{m}}^d)~|~d\geq 1\}$, and its \em{Sperner number}, $\max\{\mu(I)~|~I\subset A\}$. Watanabe proves that ``most" Gorenstein rings have the strong Lefschetz property \cite[Example 3.9]{W} and also proves that if $A$ has the strong Lefschetz property, then the Dilworth number of $A$ is equal to the Sperner number of $A$ \cite[Proposition 3.5]{W}.  An interesting example constructed by Ikeda  \cite[Example 4.4]{I} with Hilbert function $(1,4,10,10,4,1)$ provides an example of a Gorenstein ring with Sperner number $10$ and Dilworth number $11$, thus giving an example of a Gorenstein ring that does not have the strong Lefschetz property.

Herzog and Popescu \cite{HP} show that if $M$ is a standard graded Artinian Gorenstein $k$-algebra with the strong Lefschetz property, then $M[y]/(y^m)$ has the strong Lefschetz property for $y$ an indeterminate. In this paper we further explore the relationship between the strong Lefschetz property and these types of extensions through the study of the decomposition of the module over a PID. We introduce below a concept that we call {\em{almost centered}} (Definition \ref{almostcentered}) that involves a partial order on the summands in this decomposition. We determine that in order for the extension to have the strong Lefschetz property its decomposition with respect to a strong Lefschetz element must be almost centered.

A main result of this paper describes the class $\mathcal{H}$ of Hilbert series for which the following theorem holds:

\begin{theorem3.10}\label{qh}
Let $l\in R_1$ be a strong Lefschetz element for $M$ and $S=k[l]$. Then the following are equivalent:
\begin{itemize}
\item[(i)] $H_M(t)\in \mathcal{H}$,
\item[(ii)] $l+y\in R[y]_1$ is a strong Leschetz element of $M\otimes_k k[y]/(y^m)$ for all $m\geq0$,
\item[(iii)] The decomposition of $M$ as an $S$-module is almost centered.
\end{itemize}
\end{theorem3.10}

To prove Theorem \ref{class} we analyze the decomposition of $M$ into cyclic $k[l]$-modules where $l\in R_1$. First we prove in Theorem \ref{extends} the equivalence of (ii) and (iii) of Theorem \ref{class}. Given a module $M$ with the strong Lefschetz property, the almost centered condition provides a way of finding more modules with the strong Lefschetz property.  
 With additional hypotheses, we show in Theorem \ref{halmostclosedundertensor} and Corollary \ref{symmetrictensor} that the  tensor product of finite graded $R$-modules with the strong Lefschetz and almost centered properties again has these properties. 

The almost centered property of Definition \ref{almostcentered} links the Hilbert series of $M$ to the strong Lefschetz property extending
from $M$  to $M\otimes_k k[y]/(y^m)$ for all $m\geq0$.  Let $H_M(t)=\sum_{i=0}^ph^it^i$ denote the Hilbert series of $M$ where $h_i:=\dim M_i$ and $p$ is the socle degree (or postulation number) of $M$. We define the {\em socle degree} of $M$ as the largest nonnegative integer for which the Hilbert function of $M$ differs from the Hilbert polynomial of $M$. 

\begin{definition}\label{h}
We show that the class of Hilbert series that satisfies Theorem \ref{class} is precisely
\begin{eqnarray*}
{\mathcal{H}}=\{H_M(t)&|&h_{i-1}\leq h_{p-i}\leq h_i \text{ for all } 1\leq i \leq \lfloor \frac{p}{2} \rfloor \text{ or }\\
&& h_{p-i+1}\leq h_{i}\leq h_{p-i} \text{ for all } 1\leq i \leq \lfloor \frac{p}{2} \rfloor \}.
\end{eqnarray*}
\end{definition}

\section{Setting and Preliminary Results}
We use the following notation throughout the paper:
\begin{setting}\label{2.1}
Let $k$ be a field of characteristic zero, and let $R=k[x_1,\dots,x_n]$ be a polynomial ring over $k$. Let $M$ be a finitely generated zero-dimensional $R$-graded module. For convenience of notation we assume that the first nonzero degree of $M$ is $M_0$. We may do this because the results hold for $M$ if and only if they hold for a shifted version of $M$. Let $S=k[l]$ where $l\in R_1$.
\end{setting}

We use the graded version of the fundamental theorem of modules over PID's to decompose $M$ into cyclic $k[l]$ modules, where $l\in R_1$. We record in Theorem \ref{PID} the version we will use. The proof follows from a similar argument to that of Lang \cite[Theorem 7.5]{L}.

\begin{theorem}\label{PID} 
Let $M$ and $R$ be as described in Setting \ref{2.1}, and $l\in R_1$. Set $S=k[l]$. Then there exists a degree zero $S$-module isomorphism $$M\cong \bigoplus_{\alpha \in \Lambda} S_{\alpha}$$ where $S_{\alpha}=S(-i_{\alpha})/(l^{d_{\alpha}})$ and $\Lambda$ is a finite set, $i_{\alpha}$ are nonnegative integers, and $d_{\alpha}$ are positive integers. 
\end{theorem}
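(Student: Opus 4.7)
The plan is to adapt the standard proof of the fundamental theorem for finitely generated modules over a PID (as in Lang's Theorem 7.5) to the graded setting by tracking the internal degree at every step.

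First I would record the structure of $S=k[l]$ as a graded ring: every nonzero homogeneous element has the form $c\,l^n$ with $c\in k^*$ and $n\geq 0$, so every nonzero graded ideal of $S$ is generated by a pure power of $l$, and the only homogeneous units are the nonzero scalars. Because $M$ is zero-dimensional over $R$, it is finite-dimensional over $k$ and hence a finitely generated graded torsion $S$-module. Choosing homogeneous $S$-generators $m_1,\dots,m_r$ of $M$ yields a graded presentation
\[
F_1 \xrightarrow{\phi} F_0 \twoheadrightarrow M \to 0,
\]
with $F_0=\bigoplus_{i=1}^r S(-a_i)$ where $a_i=\deg m_i$, $F_1$ a graded free cover of $\ker(F_0\to M)$, and $\phi$ a degree-zero $S$-linear map.

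The heart of the argument is a graded Smith normal form for $\phi$. The allowed operations on the matrix of $\phi$ are: permutations of rows and columns; scaling by elements of $k^*$; and adding $c\,l^m$ times one row (column) to another whenever the shifts on $F_0$ (resp.\ $F_1$) permit so that the result remains a degree-zero map. Since divisibility among nonzero homogeneous elements of $S=k[l]$ is linearly ordered by the exponent of $l$, Lang's Euclidean-style reduction goes through verbatim, each step preserving degree-zeroness. One reaches a diagonal presentation whose nonzero entries are powers of $l$, yielding
\[
M \cong \bigoplus_{\alpha\in\Lambda} S(-i_\alpha)/(l^{d_\alpha}) \;\oplus\; \bigoplus_{\beta} S(-j_\beta).
\]

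To finish, the free summands $S(-j_\beta)$ cannot occur because each is infinite-dimensional over $k$ while $M$ is not; after discarding any zero cyclic factors, the exponents $d_\alpha$ are positive. The assumption in Setting~\ref{2.1} that the first nonzero graded piece is $M_0$ forces $i_\alpha \geq 0$ for every $\alpha$, delivering the desired degree-zero isomorphism. The main obstacle is the bookkeeping in the graded Smith normal form step — namely, checking that each elementary reduction in Lang's argument can be performed within the subcategory of degree-zero maps between shifted graded free $S$-modules. This is manageable precisely because $S=k[l]$ is so simple: divisibility of homogeneous elements is totally ordered by the power of $l$, so one can always choose a pivot of minimal $l$-exponent and clear its row and column by homogeneous elementary operations of the correct degree.
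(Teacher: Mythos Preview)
Your proposal is correct and follows essentially the same approach as the paper, which simply states that the proof follows from an argument similar to Lang's Theorem~7.5 without providing further details. You have supplied the graded bookkeeping that the paper leaves implicit, in particular the graded Smith normal form and the elimination of free summands via finite $k$-dimension.
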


\begin{example}\label{2dimensional}
Let $R=k[x_1,x_2]$ and $M=R/(x_1^3,x_2^5)$. Then $M$ is generated as an $S$-module by $\{1,x_2,x_2^2,x_2^3\}$, and its decomposition as an $S$-module given by Theorem \ref{PID} is $M\cong S/(l^7)\oplus S(-1)/(l^5)\oplus S(-2)/(l^3) \oplus S(-3)/(l)$.

More generally, if $M=R/(x_1^a, x_2^b)$.  Watanabe \cite{W} and Stanley \cite{S} both show that $l=x_1+x_2$ is a strong Lefschetz element for $M$. Considering $M$ as an $S=k[l]$ module, Thoerem \ref{PID} gives $M\cong \bigoplus_{h=0}^{\min\{a,b\}} S(-h)/(l^{a+b-1-2h})$.
\end{example}

For $l\in R_1$ (not necessarily a strong Lefschetz element), we write $S=k[l]$ and $M\cong \bigoplus_{\alpha \in \Lambda} S_{\alpha}$ for the decomposition of $M$ as an $S$-module given by Theorem \ref{PID}, where $S_{\alpha}=S(-i_{\alpha})/(l^{d_{\alpha}})$. 

\begin{remark} With notation as above, let $p$ denote the socle degree of $M\cong\bigoplus_{\alpha \in \Lambda}, S_{\alpha}$ where $S_{\alpha}=S(-i_{\alpha})/(l^{d_{\alpha}})$. Then for all $\alpha$, $i_{\alpha}\leq p$ and $d_{\alpha} \leq p+1$. This follows because $\dim M_d = 0$ for all $d>p$ and $\dim (S_{\alpha})_d \neq 0$ for all $i_{\alpha}\leq d \leq d_{\alpha}+i_{\alpha}-1$.
\end{remark}

\begin{lemma}\label{removingpieces}
Let $M$ and $S$ be as above and assume that $l\in R_1$ is a strong Lefschetz element for $M$. Then for all $\Gamma \subset \Lambda$ the $S$-module $\bigoplus_{\alpha \in \Gamma} S_{\alpha}$ has the strong Lefschetz property with strong Lefschetz element $l$.
\end{lemma}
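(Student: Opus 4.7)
The plan is to exploit the fact that, under the decomposition $M\cong \bigoplus_{\alpha\in\Lambda}S_\alpha$ of Theorem \ref{PID}, multiplication by $l^a$ decomposes as a direct sum of maps on the cyclic pieces. For each $\alpha$, let $I_\alpha=[i_\alpha,\,i_\alpha+d_\alpha-1]$ be the set of degrees where $S_\alpha$ is nonzero; on each $(S_\alpha)_d$ (which is $0$ or $1$-dimensional over $k$), multiplication by $l^a$ is either an isomorphism onto $(S_\alpha)_{d+a}$ (when both $d$ and $d+a$ lie in $I_\alpha$) or the zero map. Picking, for each $\alpha$, the basis $\{\overline{l^{\,d-i_\alpha}}\}$ of $(S_\alpha)_d$ makes the action of $l^a$ on $M_d=\bigoplus_\alpha (S_\alpha)_d$ literally diagonal with entries $0$ and $1$.

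Given this, the key observation is that the rank of $l^a:\bigoplus_{\alpha\in\Gamma}(S_\alpha)_d\to\bigoplus_{\alpha\in\Gamma}(S_\alpha)_{d+a}$ equals $|\{\alpha\in\Gamma:\,d,d+a\in I_\alpha\}|$, while the source and target have dimensions $|\{\alpha\in\Gamma:\,d\in I_\alpha\}|$ and $|\{\alpha\in\Gamma:\,d+a\in I_\alpha\}|$ respectively. Consequently $l^a$ has full rank on the $\Gamma$-subsum if and only if either (inj) every $\alpha\in\Gamma$ with $d\in I_\alpha$ also has $d+a\in I_\alpha$, or (surj) every $\alpha\in\Gamma$ with $d+a\in I_\alpha$ also has $d\in I_\alpha$. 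The hypothesis that $l$ is a strong Lefschetz element for $M$ means this disjunction holds for $\Lambda$ at every $(d,a)$.

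The proof then concludes by the trivial observation that each of the conditions (inj) and (surj) is hereditary under taking subsets: if the universal quantifier ranges correctly over $\Lambda$, it certainly ranges correctly over any $\Gamma\subset\Lambda$. Hence for every $(d,a)$ the same alternative (inj) or (surj) that worked for $\Lambda$ continues to work for $\Gamma$, so $l^a$ has full rank on $\bigoplus_{\alpha\in\Gamma}(S_\alpha)_d$ for all $d,a\geq 0$, proving that $l$ is a strong Lefschetz element for $\bigoplus_{\alpha\in\Gamma}S_\alpha$. There is no substantial obstacle here; the only mildly delicate point is verifying the combinatorial characterization of SLP on direct sums of cyclic $S$-modules, which amounts to computing the rank of a $0/1$-diagonal matrix.
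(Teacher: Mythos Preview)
Your proof is correct and follows essentially the same approach as the paper's. The paper's argument is the one-line observation that, since $l^a$ on $M$ decomposes as a direct sum, full rank on $M_d$ forces each component map $l^a:(S_\alpha)_d\to(S_\alpha)_{d+a}$ to be injective for all $\alpha\in\Lambda$ or surjective for all $\alpha\in\Lambda$, and this immediately restricts to any subset $\Gamma\subset\Lambda$; your version simply unpacks this into the combinatorial language of the support intervals $I_\alpha$ and the $0/1$-diagonal matrix.
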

\begin{proof}
If $l$ is a strong Lefschetz element for $M$, then for any fixed choice of $a$ and $d$, the map $l^a:(S_{\alpha})_d\to(S_{\alpha})_{d+a}$ is injective for all $\alpha \in \Lambda$ or is surjective for all $\alpha \in \Lambda$. Hence for any choice of $a$ and $d$,  $l^a:(\bigoplus_{\alpha \in \Gamma} S_{\alpha})_d\to (\bigoplus_{\alpha \in \Gamma} S_{\alpha})_{d+a}$ is injective or surjective, giving that $l$ is a strong Lefschetz element for $\bigoplus_{\alpha \in \Gamma} S_{\alpha}$.
\end{proof}

The decomposition of $M$ as a $k[l]$-module motivates the definition of a partial order on the summands of the decomposition.

\begin{definition}\label{order}
Let $M$, $S$, and $S_{\alpha}$ be as above. We define a partial order on $\{S_{\alpha}\}_{\alpha \in \Lambda}$ in the following way: for all $\beta$ and $\alpha$ in $\Lambda$,  $$S_{\beta} \preceq S_{\alpha} \iff i_{\alpha} \leq i_{\beta} \text{~~~and~~~} d_{\beta}+i_{\beta} \leq d_{\alpha}+i_{\alpha}.$$
When $S_{\beta}\preceq S_{\alpha}$, we define the difference between the starting degrees of $S_{\alpha}$ and $S_{\beta}$ to be $L_{\alpha \beta}:=i_{\beta}-i_{\alpha}$ and the difference between the ending degrees to be $R_{\alpha \beta}:=d_{\alpha}+i_{\alpha}-d_{\beta}-i_{\beta}$.
\end{definition}

The following lemma demonstrates a connection between the partial order $\preceq$ and the strong Lefschetz property. This connection plays an integral role in the proof of Theorem \ref{class}.

\begin{lemma}\label{totallyordered}
Let $M$, $S=k[l]$, and $S_{\alpha}$ be as above. Then $l$ is a strong Lefschetz element for $M$   if and only if $\{S_{\alpha}\}_{\alpha \in \Lambda}$ is totally ordered with respect to $\preceq$.
\end{lemma}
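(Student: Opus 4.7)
The plan is to translate the problem about the Lefschetz map into a combinatorial statement about the intervals $[i_\alpha,\, i_\alpha + d_\alpha - 1]$ (the set of degrees where $S_\alpha$ is nonzero, which I will call the \emph{support} of $S_\alpha$). A direct unpacking of Definition \ref{order} shows that $S_\beta \preceq S_\alpha$ simply means that the support of $S_\beta$ is contained in the support of $S_\alpha$. Since each $S_\alpha$ has one-dimensional graded pieces on its support and zero pieces off it, and since the map $l^a : M_d \to M_{d+a}$ decomposes as a direct sum of the corresponding maps on the $(S_\alpha)_d$, a brief case check gives: the restriction $l^a : (S_\alpha)_d \to (S_\alpha)_{d+a}$ fails to be injective iff $i_\alpha \le d \le i_\alpha + d_\alpha - 1 < d+a$, and fails to be surjective iff $d < i_\alpha \le d+a \le i_\alpha + d_\alpha - 1$. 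Also, the whole map is injective (resp. surjective) iff every summand map is.

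For the direction that total ordering forces the strong Lefschetz property, I would fix $d$ and $a$ and suppose toward a contradiction that there are $\alpha, \beta \in \Lambda$ such that injectivity fails on $S_\alpha$ and surjectivity fails on $S_\beta$. Combining the two failure conditions above gives $i_\alpha \le d < i_\beta$ and $i_\alpha + d_\alpha - 1 < d+a \le i_\beta + d_\beta - 1$, so $i_\alpha < i_\beta$ and $i_\alpha + d_\alpha < i_\beta + d_\beta$. But this makes $S_\alpha$ and $S_\beta$ incomparable under $\preceq$, contradicting totality. Hence for every $d$ and $a$, the map $l^a$ is either injective on every summand or surjective on every summand, which is the strong Lefschetz condition.

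For the converse, I would argue the contrapositive. If there exist incomparable summands $S_\alpha$ and $S_\beta$, then up to swapping the labels I may assume $i_\alpha < i_\beta$ and $i_\alpha + d_\alpha - 1 < i_\beta + d_\beta - 1$. The task is to exhibit $d$ and $a$ for which $l^a$ fails both injectivity and surjectivity on $M$. When the supports overlap (that is, $i_\beta \le i_\alpha + d_\alpha - 1$) I would take $d = i_\beta - 1$ together with $a = i_\alpha + d_\alpha - i_\beta + 1$; when the supports are disjoint I would take $d = i_\alpha + d_\alpha - 1$ and $a = i_\beta - i_\alpha - d_\alpha + 1$. In each case a direct check against the two failure conditions in the first paragraph shows that injectivity fails on $S_\alpha$ and surjectivity fails on $S_\beta$, so $l^a$ on $M$ is neither injective nor surjective, and $l$ is not a strong Lefschetz element.

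The argument is essentially an extended case analysis, so there is no single deep step. The main obstacle is keeping the two incomparability sub-cases (overlapping versus disjoint supports) straight in the converse direction and verifying the nonnegativity bounds on the chosen $a$; once the support interpretation of $\preceq$ is in hand, everything reduces to comparing intervals on the integer line.
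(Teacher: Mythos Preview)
Your proof is correct and follows essentially the same approach as the paper: both directions proceed by contradiction/contrapositive, and the incomparable case is split into overlapping versus disjoint supports in order to exhibit a bad pair $(d,a)$. Your support-interval language is a slightly cleaner packaging, and you work directly on $M$ rather than first passing to $S_\alpha\oplus S_\beta$ via Lemma~\ref{removingpieces}, but the underlying argument is the same.
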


\begin{proof}Recall that for all $\alpha \in \Lambda$, $S_{\alpha}=S(-i_{\alpha})/(l^{d_{\alpha}})$.

Suppose that $l$ is a strong Lefschetz element for $M$. Let $\gamma$ and $\beta$ be in $\Lambda$. We may assume that $i_{\gamma} \leq i_{\beta}$. We show that either $S_{\beta} \preceq S_{\gamma}$ or $S_{\gamma} \preceq S_{\beta}$. Since $l$ is a strong Lefschetz element for $M$, Lemma \ref{removingpieces} implies that $l$ is a strong Lefschetz element for the $S$-module $S_{\beta}\bigoplus S_{\gamma}$. 

If $i_{\gamma} = i_{\beta}$, it follows immediately that $d_{\beta}+i_{\beta} \leq d_{\gamma}+i_	{\gamma}$ or $d_{\gamma}+i_{\gamma}\leq d_{\beta}+i_{\beta}$ which is equivalent to $S_{\beta}\preceq S_{\gamma}$ or $S_{\gamma}\preceq S_{\beta}$.

If $i_{\gamma} < i_{\beta}$, suppose that $S_{\beta}$ and $S_{\gamma}$ are not 		comparable. Then $d_{\gamma}+i_{\gamma}<d_{\beta}+i_{\beta}$. We find $a$ and $d$ so that the map $$l^a: (S_{\beta}\bigoplus S_{\gamma})_d \to (S_{\beta}\bigoplus S_{\gamma})_{d+a}$$ is injective but not surjective on one component and surjective but not injective on the other component, contradicting that $S_{\beta}\bigoplus S_{\gamma}$ has the strong Lefschetz property.

If $d_{\gamma}+i_{\gamma} \leq i_{\beta}$, set $a=i_{\beta}-i_{\gamma}-d_{\gamma}+1$ and $d=d_{\gamma}+i_{\gamma}-1$. Then\\ $(S_{\beta})_d=(S_{\gamma})_{d+1}=1$ and $(S_{\beta})_{d+1}=(S_{\gamma})_d=0$.

If $i_{\beta}\leq d_{\gamma}+i_{\gamma}-1$, set $a=\max\{d_{\beta}, d_{\gamma}\}$, and set $d=i_{\beta}-1$ when $a=d_{\beta}$ and $d=i_{\gamma}$ when $a=i_{\gamma}$. Then $(S_{\beta})_d=(S_{\gamma})_{d+1}=0$ and $(S_{\beta})_{d+1}=(S_{\gamma})_d=1$. 
	
Conversely, suppose that $\{S_{\alpha}\}_{\alpha \in \Lambda}$ is totally ordered with respect to $\preceq$ and assume that $l$ is not a strong Lefschetz element for $M$. Then there exists $a$ and $d$ so that the map $l^a:S_d\to S_{d+a}$ is neither injective or surjective. It follows that there exists $\beta$ and $\gamma$ in $\Lambda$, as well as $a$ and $d$, so that the map $$l^a:(S_{\beta})_d\to (S_{\beta})_{d+a}$$ is injective but not surjective and $$l^a:(S_{\gamma})_d\to (S_{\gamma})_{d+a}$$ is surjective but not injective. Since $\{S_{\alpha}\}_{\alpha \in \Lambda}$ is totally ordered, the two maps force $S_{\beta} \preceq S_{\gamma}$. This combined with the information about the maps injectivity and surjectivity force the inequalities $d_{\beta}+i_{\beta}\leq d_{\gamma} +i_{\gamma}\leq d+a \leq d_{\beta}+i_{\beta}-1,$ which is a contradiction.
\end{proof} 

\begin{remark}\label{largestelement}
With notation as in Lemma \ref{totallyordered}, if $M$ has the strong Lefschetz property and $l\in R_1$ is a strong Lefschetz element, then the decomposition of $M$ into cyclic modules is unique regardless of the strong Lefschetz element $l$ that is chosen. Also, the total ordering on $\{S_{\alpha}\}_{\alpha\in\Lambda}$ implies that $\{S_{\alpha}\}_{\alpha\in\Lambda}$ has a largest element, $S/(l^{p+1})$. In the case where $M$ is a cyclic $R$-module, there is only one copy of $S/(l^{p+1})$ making it the unique largest element of $\{S_{\alpha}\}_{\alpha\in\Lambda}$.
\end{remark}

\section{Main Result}
Given a module $M$ as in Setting \ref{2.1} where $l\in R_1$ is a strong Lefschetz element for $M$, we describe the decomposition of $M$ as an $S$-module if the strong Lefschetz property extends to $M\otimes_k k[y]/(y^m)$.

\begin{definition}\label{almostcentered}
For $l\in R_1$ a strong Lefschetz element of $M$ and $S=k[l]$, the decomposition of $M$ as an $S$-module $M\cong_S \bigoplus_{\alpha \in \Lambda} S_{\alpha}$ is said to be {\em{almost centered}} if whenever $\alpha$ and $\beta$ are in $\Lambda$, with $S_{\beta} \preceq S_{\alpha}$, then $|L_{\alpha \beta}-R_{\alpha \beta}|\leq 1$.
\end{definition}

\begin{remark}
The condition $|L_{\alpha \beta}-R_{\alpha \beta}|\leq 1$ in Definition \ref{almostcentered} says that $S_{\alpha}$ and $S_{\beta}$ are as centered as possible with respect to their starting and ending degrees. Notice that if $|L_{\alpha \beta}-R_{\alpha \beta}|=0$ for all $\alpha$ and $\beta$ in $\Lambda$, then $\dim M_d = \dim M_{p-d}$ for all $0\leq d \leq p$, hence the Hilbert function of $M$ is symmetric.
\end{remark}

\begin{theorem}\label{extends}Let $l\in R_1$ be a strong Lefschetz element for $M$, where $M$ is as in Setting \ref{2.1}. Then the $S[y]$ module $M\otimes_k k[y]/(y^m)$ has the strong Lefschetz property for all $m\geq0$ if and only if the decomposition of $M$ as an $S$-module is almost centered. In this case $l+y$ is a strong Lefschetz element of $M\otimes_k k[y]/(y^m)$
\end{theorem}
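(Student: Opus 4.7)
The plan is to reduce the problem to the total-order criterion of Lemma \ref{totallyordered} applied to $M \otimes_k k[y]/(y^m)$ viewed as a finitely generated graded module over the PID $k[l+y]$. Starting from the $S$-decomposition $M \cong \bigoplus_{\alpha \in \Lambda} S(-i_\alpha)/(l^{d_\alpha})$, tensoring with $k[y]/(y^m)$ over $k$ yields
$$M \otimes_k k[y]/(y^m) \;\cong\; \bigoplus_{\alpha \in \Lambda} \bigl(k[l,y]/(l^{d_\alpha},\, y^m)\bigr)(-i_\alpha).$$
Since $k[l,y]$ is a standard graded polynomial ring in two variables, Example \ref{2dimensional} (Stanley--Watanabe) identifies the $k[l+y]$-module structure of each summand as
$$\bigoplus_{h=0}^{\min\{d_\alpha,m\}-1} k[l+y]\bigl(-i_\alpha - h\bigr)\big/\bigl((l+y)^{d_\alpha + m - 1 - 2h}\bigr).$$
Concatenating these over $\alpha$ produces the decomposition of $M \otimes_k k[y]/(y^m)$ into cyclic $k[l+y]$-modules guaranteed by Theorem \ref{PID}.

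With this decomposition, Lemma \ref{totallyordered} reduces the strong Lefschetz property of $l+y$ to the statement that the family of summands, indexed by pairs $(\alpha, h)$, is totally ordered under $\preceq$. A summand $(\alpha, h)$ has starting degree $i_\alpha + h$ and ending degree $i_\alpha + d_\alpha + m - 2 - h$. Summands with the same $\alpha$ are automatically comparable, since increasing $h$ shifts the start up and the end down by the same amount. For summands with distinct indices, assume without loss that $S_\beta \preceq S_\alpha$ and set $L = L_{\alpha\beta}$, $R = R_{\alpha\beta}$. Comparing starts and ends then shows $(\alpha, h) \preceq (\beta, h')$ iff $h - h' \geq \max(L,R)$, while $(\beta, h') \preceq (\alpha, h)$ iff $h - h' \leq \min(L,R)$; hence $(\alpha, h)$ and $(\beta, h')$ are incomparable precisely when the integer $h - h'$ lies strictly between $L$ and $R$.

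This dichotomy yields the equivalence. If the decomposition of $M$ is almost centered, then $|L-R|\leq 1$ for every comparable pair, so the open interval between $L$ and $R$ contains no integer and no incomparable pair can arise for any $m$; Lemma \ref{totallyordered} then certifies that $l+y$ is a strong Lefschetz element of $M \otimes_k k[y]/(y^m)$ for all $m$. Conversely, if some comparable pair satisfies $|L-R|\geq 2$, pick an integer $k$ strictly between $\min(L,R)$ and $\max(L,R)$; since $L + R = d_\alpha - d_\beta$ forces $k < d_\alpha$, taking $m \geq k+1$ makes both $h = k$ and $h' = 0$ admissible, producing an incomparable pair and destroying the strong Lefschetz property of the extension. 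I expect the main technical obstacle to be bookkeeping the degree shifts and the valid ranges of $h, h'$ carefully in the incomparability computation; once those are in hand, the argument is a clean combination of Lemma \ref{totallyordered} with the Stanley--Watanabe decomposition recalled in Example \ref{2dimensional}.
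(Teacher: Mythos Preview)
Your argument is correct and follows the same route as the paper: compute the $k[l+y]$-decomposition of $M\otimes_k k[y]/(y^m)$ via Example~\ref{2dimensional} and reduce both directions to the total-order criterion of Lemma~\ref{totallyordered}; your packaging is in fact a bit cleaner, since you isolate the single combinatorial fact that $(\alpha,h)$ and $(\beta,h')$ are incomparable exactly when $h-h'$ lies strictly between $L_{\alpha\beta}$ and $R_{\alpha\beta}$, whereas the paper runs two separate contradiction arguments with ad hoc choices of $m$ and $h$. One shared imprecision worth flagging: in the ``not almost centered $\Rightarrow$ failure'' direction you (like the paper) only show that $l+y$ fails to be a strong Lefschetz element, whereas the theorem as stated concerns the existence of \emph{any} strong Lefschetz element in $S[y]_1$; this is harmless, since the rescaling $l\mapsto a^{-1}l,\ y\mapsto b^{-1}y$ shows every $al+by$ with $ab\neq 0$ yields the same cyclic decomposition, and a pure multiple of $l$ or of $y$ is nilpotent of order at most $p+1$ or $m$ and hence cannot be strong Lefschetz once both $p\geq 1$ and $m\geq 2$.
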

\begin{proof}
As an $S[y]$ module, $M\otimes_k k[y]/(y^m)\cong \bigoplus_{\alpha\in \Lambda} k[l,y](-i_{\alpha})/(l^{d_{\alpha}},y^m)$. Let $z=l+y$, and write $A=k[z]$. Given the decomposition of $M$ as an $S$-module, $M\cong \bigoplus_{\alpha \in \Lambda} S_{\alpha}$, we can use Example \ref{2dimensional} to decompose $M\otimes_k k[y]/(y^m)$ as an $A$ module: $$M\otimes_k k[y]/(y^m)\cong \bigoplus_{\alpha \in \Lambda} \bigoplus_{h=0}^{\min\{d_{\alpha},m\}} A(-(i_{\alpha}+h))/(z^{d_{\alpha}+m-2h-1}).$$ We will write $A_{\alpha h}$ for $A(-(i_{\alpha}+h))/(z^{d_{\alpha}+m-2h-1})$.

Suppose $M\otimes_k k[y]/(y^m)$ has the strong Lefschetz property for all positive $m$. Lemma \ref{totallyordered} gives that $\{A_{\alpha h}\}_{\alpha \in \Lambda, 0\leq h \leq \min\{d_{\alpha},m\}}$ is totally ordered for all $m$. Suppose that the decomposition of $M\cong \bigoplus_{\alpha \in \Lambda} S_{\alpha}$ is not almost centered. Then there exist $\alpha$ and $\beta$ in $\Lambda$ with $S_{\beta} \preceq S_{\alpha}$ such that $|L_{\alpha \beta}-R_{\alpha \beta}|\geq 2$.

Set $m=h=\min\{L_{\alpha \beta}+1,R_{\alpha \beta}+1\}$ and consider $$A_{\alpha h}=A(-(i_{\alpha}+h))/(z^{d_{\alpha}-h-1})~~\text{and}~~A_{\beta 0}=A(-i_{\beta})/(z^{d_{\beta}+h-1}).$$

If $m=h=L_{\alpha \beta}+1$, then $i_{\alpha}+h=i_{\beta}+1>i_{\beta}$, which implies $A_{\alpha h}\preceq A_{\beta 0}$ because $\{A_{\alpha h}\}_{\alpha \in \Lambda, 0\leq h \leq \min\{d_{\alpha},m\}}$ is totally ordered. Hence
\begin{eqnarray*}
d_{\alpha}-h-1+i_{\alpha}+h &\leq& d_{\beta}+h-1+i_{\beta} \\
d_{\alpha}+i_{\alpha}-d_{\beta}-i_{\beta} &\leq& h \\
R_{\alpha \beta} &\leq& L_{\alpha \beta}+1 \leq R_{\alpha \beta}+1
\end{eqnarray*}
which is a contradiction to $|L_{\alpha \beta}-R_{\alpha \beta}|\geq 2$.

If $m=h=R_{\alpha \beta}+1$, then $$i_{\alpha}+h=R_{\alpha \beta}-L_{\alpha \beta}+i_{\beta}+1\leq -2+i_{\beta}+1=i_{\beta}-1<i_{\beta},$$ which implies that $A_{\beta 0}\preceq A_{\alpha h}$. Therefore\begin{eqnarray*}
d_{\beta}+h-1+i_{\beta} &\leq& d_{\alpha}-h-1+i_{\alpha}+h\\
h&\leq&d_{\alpha}+i_{\alpha}-d_{\beta}-i_{\beta} \\
R_{\alpha \beta}+1&\leq&R_{\alpha \beta}
\end{eqnarray*} which is a contradiction.

Therefore the decomposition of $M$ as an $S$-module is almost centered whenever $M\otimes_k k[y]/(y^m)$ has the strong Lefschetz property for all $m$.

Conversely, suppose that the decomposition of $M$ as an $S$-module is almost centered and assume that $M\otimes_k k[y]/(y^m)$ does not have the strong Lefschetz property for all $m$. Then there exists an $m$ such that $\{A_{\alpha h}\}_{\alpha \in \Lambda, 0\leq h \leq \min\{d_{\alpha},m\}}$ is not totally ordered. Hence there exist $\alpha$, $\beta$, $0\leq h_{\alpha}\leq \min\{d_{\alpha},m\}$ and $0\leq h_{\beta}\leq \min\{d_{\beta},m\}$ so that $A_{\alpha h_{\alpha}}$ and $A_{\beta h_{\beta}}$ are not comparable under the partial order defined in Definition \ref{order}. We may assume that $i_{\alpha}+h_{\alpha}<i_{\beta}+h_{\beta}$. Then \begin{eqnarray*}
d_{\beta}+m-2h_{\beta}-1+i_{\beta}+h_{\beta}&>&d_{\alpha}+m-2h_{\alpha}-1+i_{\alpha}+h_{\alpha}\\
h_{\alpha}-h_{\beta} &>& d_{\alpha}+i_{\alpha}-d_{\beta}-i_{\beta}\\
0>(i_{\alpha}+h_{\alpha})-(i_{\beta}+h_{\beta})&>&d_{\alpha}+2i_{\alpha}-d_{\beta}-2i_{\beta}
\end{eqnarray*}
This implies that $-2\geq d_{\alpha}+2i_{\alpha}-d_{\beta}-2i_{\beta}$ which contradicts the assumption that the decomposition of $M$ is almost centered.
\end{proof}

Theorem \ref{extends} allows us to construct new modules with the strong Lefschetz property, in particular the modules $M\otimes_kk[y]/(y^m)$ where the decomposition of $M$ is almost centered. The decomposition of $M\otimes_kk[y]/(y^m)$ is also almost centered when the decomposition of $M$ is almost centered.

\begin{corollary}\label{tensorisac}
Let $l\in R_1$ be a strong Lefschetz element for the $R$-module $M$ and set $S=k[l]$. If the decomposition of $M$ as an $S$-module is almost centered, then the decomposition of $M\otimes_kk[y]/(y^m)$ as a $k[l+y]$ module is almost centered.
\end{corollary}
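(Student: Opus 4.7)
The plan is to use the explicit decomposition of $M\otimes_k k[y]/(y^m)$ as an $A=k[l+y]$-module that was exhibited inside the proof of Theorem \ref{extends}, and to show by a direct calculation that the almost centered inequality $|L-R|\le 1$ on any comparable pair of cyclic $A$-summands reduces to the almost centered inequality on the corresponding pair of cyclic $S$-summands of $M$.

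First, since the $S$-decomposition of $M$ is almost centered, Theorem \ref{extends} gives that $l+y$ is a strong Lefschetz element for $M\otimes_k k[y]/(y^m)$, and Lemma \ref{totallyordered} then tells us that the cyclic $A$-summands
$$A_{\alpha h}=A(-(i_\alpha+h))/(z^{d_\alpha+m-2h-1})$$
are totally ordered under $\preceq$, so any two are comparable. Given a pair $A_{\beta h_\beta}\preceq A_{\alpha h_\alpha}$, I would read off the starting and ending degrees directly from the presentation, obtaining
$$L=(i_\beta+h_\beta)-(i_\alpha+h_\alpha),\qquad R=(i_\alpha+d_\alpha-h_\alpha)-(i_\beta+d_\beta-h_\beta),$$
and a short calculation gives
$$L-R=2(i_\beta-i_\alpha)+(d_\beta-d_\alpha),$$
a quantity that depends only on the original indices $\alpha,\beta$ and in which $h_\alpha$ and $h_\beta$ cancel entirely.

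To finish, I would split on how $S_\alpha$ and $S_\beta$ sit in the totally ordered set $\{S_\gamma\}_{\gamma\in\Lambda}$ given by Lemma \ref{totallyordered} applied to $M$. If $\alpha=\beta$, then $L-R=0$ outright. If $S_\beta\preceq S_\alpha$, then the expression above is exactly $L_{\alpha\beta}-R_{\alpha\beta}$, which has absolute value at most $1$ by the almost centered hypothesis on $M$; and if $S_\alpha\preceq S_\beta$, the same expression equals $-(L_{\beta\alpha}-R_{\beta\alpha})$, again bounded by $1$ in absolute value. Hence $|L-R|\le 1$ on every comparable pair, which is the definition of almost centered for the $A$-decomposition. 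I do not anticipate a serious obstacle: the argument is really just a careful unpacking of the definitions plus the observation that the shift parameters $h_\alpha,h_\beta$ cancel. The one place to be vigilant is the sign flip that appears when $S_\alpha$ and $S_\beta$ are ordered in $M$ oppositely to how $A_{\alpha h_\alpha}$ and $A_{\beta h_\beta}$ are ordered in the tensor product, since the label playing the role of ``larger'' in one decomposition need not play the same role in the other.
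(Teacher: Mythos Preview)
Your proof is correct and follows essentially the same approach as the paper: both arguments use the explicit $A$-decomposition from the proof of Theorem~\ref{extends}, compute the quantity $L-R$ for a pair of $A$-summands, and observe that the $m$ and the $h$-parameters cancel to leave exactly $d_\beta+2i_\beta-d_\alpha-2i_\alpha$, which is bounded in absolute value by $1$ by the almost centered hypothesis on $M$. The paper phrases this as a short proof by contradiction and relies on the absolute value to make the ordering of $S_\alpha,S_\beta$ in $M$ irrelevant, whereas you argue directly and make the case split explicit; these are minor presentational differences, not different ideas.
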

\begin{proof}
Using the notation of Theorem \ref{extends}, let $z=l+y$ and write $A=k[z]$. Given the decomposition of $M$ as an $S$-module, $M\cong \bigoplus_{\alpha \in \Lambda} S_{\alpha}$, we can use Example \ref{2dimensional} to decompose $M\otimes_k k[y]/(y^m)$ as an $A$ module: $$M\otimes_k k[y]/(y^m)\cong \bigoplus_{\alpha \in \Lambda} \bigoplus_{h=0}^{\min\{d_{\alpha},m\}} A(-(i_{\alpha}+h))/(z^{d_{\alpha}+m-2h-1}).$$

Theorem \ref{extends} tells us that the decomposition of $M\otimes_k k[y]/(y^m)$ is totally ordered. Suppose there exists $m$ such that the decomposition of $M\otimes_k k[y]/(y^m)$ is not almost centered. Then there exist $\alpha$, $\beta$, $h_{\alpha}$, and $h_{\beta}$ with $S_{\beta}\preceq S_{\alpha}$ so that
\begin{eqnarray*}
2&\leq& |d_{\alpha}+m-2h_{\alpha}-1+2i_{\alpha}+2h_{\alpha}-d_{\beta}-m+2h_{\beta}+1-2i_{\beta}-2h_{\beta}| \\
&=& |d_{\alpha}+2i_{\alpha}-d_{\beta}-2i_{\beta}| \\
&=& |L_{\alpha \beta}-R_{\alpha \beta}|,
\end{eqnarray*}but this contradicts the decomposition of $M$ being almost centered.
\end{proof}

\begin{theorem}\label{halmostclosedundertensor}
Let $l \in R_1$ and $y\in R_1$ be strong Lefschetz elements for the $R$-modules $M$ and $N$ respectively. If the Hilbert function of $M$ is symmetric and the decomposition of $N$ as a $k[y]$ module is almost centered, then $M\otimes_kN$ has the strong Lefschetz property and the decomposition of $M\otimes_kN$ as a $k[l+y]$ module is almost centered.
\end{theorem}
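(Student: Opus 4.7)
The plan is to decompose $M \otimes_k N$ as a $k[l+y]$-module, and then verify that the resulting decomposition is totally ordered and almost centered; by Lemma \ref{totallyordered} and Definition \ref{almostcentered} this will prove both assertions simultaneously.

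First I would apply Theorem \ref{PID} to write $M \cong \bigoplus_{\alpha \in \Lambda} S_\alpha$ with $S_\alpha = S(-i_\alpha)/(l^{d_\alpha})$, and $N \cong \bigoplus_{\gamma \in \Gamma} T_\gamma$ with $T_\gamma = T(-j_\gamma)/(y^{e_\gamma})$, where $S = k[l]$ and $T = k[y]$. As a preliminary step I need to establish that because $M$ has the strong Lefschetz property and a symmetric Hilbert function, each $S_\alpha$ is centered about $p_M/2$, meaning $2i_\alpha + d_\alpha - 1 = p_M$ for every $\alpha$. This is proved by reflecting the totally ordered decomposition provided by Lemma \ref{totallyordered} across $p_M/2$; the reflected collection has the same Hilbert function as $M$ by symmetry and is again totally ordered, so by the uniqueness noted in Remark \ref{largestelement} it equals the original decomposition, and matching summands of equal length forces each $S_\alpha$ to be its own reflection.

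Setting $z = l + y$ and $A = k[z]$, I would next decompose $M \otimes_k N$ as an $A$-module. Each factor $S_\alpha \otimes_k T_\gamma$ is a shifted copy of $k[l,y]/(l^{d_\alpha}, y^{e_\gamma})$, so Example \ref{2dimensional} (with the roles of $x_1, x_2$ played by $l, y$) gives
\[
M \otimes_k N \;\cong\; \bigoplus_{\alpha, \gamma}\bigoplus_{h=0}^{\min\{d_\alpha, e_\gamma\}-1} A(-(i_\alpha + j_\gamma + h))/(z^{d_\alpha + e_\gamma - 2h - 1}).
\]
Write $A_{\alpha,\gamma,h}$ for the $(\alpha,\gamma,h)$ summand. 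A direct computation using the centering identity $2i_\alpha + d_\alpha - 1 = p_M$ shows that the midpoint of the nonvanishing degree interval of $A_{\alpha,\gamma,h}$ equals $p_M/2 + j_\gamma + (e_\gamma - 1)/2$, which depends only on $\gamma$. Denote this common midpoint by $\mu_\gamma$ and note that it is the midpoint of $T_\gamma$ shifted by $p_M/2$.

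For any two summands $A_{\alpha,\gamma,h}$ and $A_{\alpha',\gamma',h'}$ with respective lengths $\ell \geq \ell'$, let $\Delta_s$ and $\Delta_e$ denote the second-minus-first difference of starting degrees and the first-minus-second difference of ending degrees. Short arithmetic gives $\Delta_s + \Delta_e = \ell - \ell'$ and $\Delta_s - \Delta_e = 2(\mu_{\gamma'} - \mu_\gamma)$. The almost centered hypothesis on $N$ combined with Lemma \ref{totallyordered} implies that any two midpoints of the $T_\gamma$ differ by at most $1/2$, so $|\Delta_s - \Delta_e| \leq 1$. When $\ell > \ell'$ the inequality $\ell - \ell' \geq 1 \geq |\Delta_s - \Delta_e|$ forces $\Delta_s, \Delta_e \geq 0$, yielding $A_{\alpha',\gamma',h'} \preceq A_{\alpha,\gamma,h}$. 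When $\ell = \ell'$, the relations $\Delta_s = -\Delta_e$ and $2|\Delta_s| \leq 1$, together with the integrality of $\Delta_s$, force $\Delta_s = \Delta_e = 0$, so the two summands occupy the same interval and are comparable. Thus $\{A_{\alpha,\gamma,h}\}$ is totally ordered, and Lemma \ref{totallyordered} yields the strong Lefschetz property for $M \otimes_k N$ with Lefschetz element $l + y$. Moreover, for each comparable pair the quantities $L$ and $R$ of Definition \ref{order} agree with $\Delta_s$ and $\Delta_e$, so $|L - R| = |\Delta_s - \Delta_e| \leq 1$, establishing the almost centered property.

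The principal obstacle is the preliminary centering claim for symmetric $M$ with the strong Lefschetz property; once this reduction is in hand, the midpoints of the tensor summands collapse to a one-parameter family indexed by $\gamma$, and the almost centered hypothesis on $N$ simultaneously controls both the possible obstruction to total ordering (via the length constraint $\ell - \ell' \geq |\Delta_s - \Delta_e|$) and the defect $|L - R|$ in the resulting decomposition.
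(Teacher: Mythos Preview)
Your argument is correct and takes a genuinely different route from the paper's. The paper reduces to the case where $N$ has exactly two cyclic summands, invokes Theorem~\ref{extends} and Corollary~\ref{tensorisac} to handle each piece $M\otimes_k k[y](-j)/(y^m)$ separately, and then verifies comparability and the almost-centered bound for cross-terms by a case split on which starting degree is smaller. You instead keep the full decompositions of both $M$ and $N$, observe that the symmetry of $M$ forces every summand $S_\alpha$ to have the common midpoint $p_M/2$, and then note that the midpoint of each $A_{\alpha,\gamma,h}$ equals $p_M/2$ plus the midpoint of $T_\gamma$; the identity $\Delta_s-\Delta_e=2(\mu_{\gamma'}-\mu_\gamma)$ then reduces both the total-ordering check and the almost-centered bound to the single inequality $|\mu_{\gamma'}-\mu_\gamma|\le 1/2$ coming from the hypothesis on $N$. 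This is more self-contained (it does not appeal to Theorem~\ref{extends} or Corollary~\ref{tensorisac}) and conceptually cleaner, since it isolates exactly why the symmetry hypothesis on $M$ is needed: it collapses the midpoints of the tensor summands to a family indexed only by $\gamma$. The paper's approach, on the other hand, reuses machinery already built and makes the inductive structure (add one cyclic factor at a time) more visible.

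One small point: your reflection argument for the centering identity $2i_\alpha+d_\alpha-1=p_M$ is correct, but the phrase ``matching summands of equal length'' implicitly uses that in a totally ordered decomposition two summands of the same length must have the same shift (otherwise they are incomparable under $\preceq$); this is immediate from Definition~\ref{order} but is worth stating. Alternatively, since the decomposition is totally ordered one can read off $i_{\alpha_k}=\min\{d:h_d\ge k\}$ and $i_{\alpha_k}+d_{\alpha_k}-1=\max\{d:h_d\ge k\}$, and symmetry of $(h_d)$ then gives the identity directly. The paper simply asserts the equivalent statement $|L_{\alpha\beta}-R_{\alpha\beta}|=0$ without proof, so you are in fact supplying more detail here.
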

\begin{proof} Given $M\cong \bigoplus_{\alpha \in \Lambda}S_{\alpha}$ where $S=k[l]$ and $S_{\alpha}=S(-i_{\alpha})(l^{d_{\alpha}})$, notice that $|L_{\alpha \beta}-R_{\alpha \beta}|=0$ for all $\alpha$ and $\beta$ in $\Lambda$, because $M$ has the strong Lefschetz property and its Hilbert function is symmetric.

We may assume that $N\cong k[y](-j_m)/(y^m)\bigoplus k[y](-j_n)/(y^n)$ with $j_m\leq j_n$ and $n+j_n\leq m+j_m$, so  $$M\otimes N\cong (M\otimes k[y]/(y^m))\bigoplus(M\otimes k[y](-j)/(y^n)).$$ Theorem \ref{extends} implies that the decompositions of $M\otimes k[y]/(y^m)$ and $M\otimes k[y](-j)/(y^n)$ are totally ordered, and Corollary \ref{tensorisac} implies that the decompositions are almost centered. It remains to check that if we take one summand of $M\otimes k[y]/(y^m)$ and another of $M\otimes k[y](-j)/(y^n)$, then they are comparable under the partial order $\preceq$ and satisfy the almost centered condition. As in Corollary \ref{tensorisac} with $z=l+y$ and $A=k[z]$ we have:
$$M\otimes k[y](-j_m)/(y^m) \cong \bigoplus_{\alpha \in \Lambda} \bigoplus_{h_{\alpha}=0}^{\min\{d_{\alpha},m\}} A(-(i_{\alpha}+h_{\alpha}+j_m))/(z^{d_{\alpha}+m-2h_{\alpha}-1}),$$ and $$M\otimes k[y](-j_n)/(y^n) \cong \bigoplus_{\beta \in \Lambda} \bigoplus_{h_{\beta}=0}^{\min\{d_{\beta},n\}} A(-(i_{\beta}+h_{\beta}+j_n))/(z^{d_{\beta}+n-2h_{\beta}-1}).$$

First we will see that for all choices of $\alpha$, $h_{\alpha}$, $\beta$ and $h_{\beta}$, $$A(-(i_{\alpha}+h_{\alpha}+j_m))/(z^{d_{\alpha}+m-2h_{\alpha}-1}) \text{~~and~~} A(-(i_{\beta}+h_{\beta}+j_n))/(z^{d_{\beta}+n-2h_{\beta}-1})$$ are comparable under $\preceq$.  Suppose they are not comparable.

If $i_{\alpha}+h_{\alpha}+j_m<i_{\beta}+h_{\beta}+j_n$, then \begin{eqnarray*}
d_{\beta}+n-2h_{\beta}-1+i_{\beta}+h_{\beta}+j_n&>& d_{\alpha}+m-2h_{\alpha}-1+i_{\alpha}+h_{\alpha}+j_m \\
d_{\alpha}+2i_{\alpha}-d_{\beta}-2i_{\beta}&>&m+2jm-n-2j_n+h_{\beta}+i_{\beta}+j_n-h_{\alpha}-i_{\alpha}-j_m\\
d_{\alpha}+2i_{\alpha}-d_{\beta}-2i_{\beta}&>&m+2jm-n-2j_n+1.
\end{eqnarray*} However, $d_{\alpha}+2i_{\alpha}-d_{\beta}-2i_{\beta}=L_{\alpha \beta}-R_{\alpha \beta}=0$ which implies $m+2jm-n-2j_n \leq -2$, contradicting the decomposition of $N$ being almost centered.

If $i_{\beta}+h_{\beta}+j_n<i_{\alpha}+h_{\alpha}+j_m$, then \begin{eqnarray*}
d_{\alpha}+m-2h_{\alpha}-1+i_{\alpha}+h_{\alpha}+j_m&>&d_{\beta}+n-2h_{\beta}-1+i_{\beta}+h_{\beta}+j_n\\
m+2j_m-n-2j_n&>&d_{\beta}+2i_{\beta}-2_{\alpha}-2i_{\alpha}+h_{\alpha}+i_{\alpha}+j_m-h_{\beta}-i_{\beta}-j_n\\
m+2j_m-n-2j_n&>&d_{\beta}+2i_{\beta}-2_{\alpha}-2i_{\alpha}+1.
\end{eqnarray*} However, $d_{\alpha}+2i_{\alpha}-d_{\beta}-2i_{\beta}=L_{\alpha \beta}-R_{\alpha \beta}=0$ which implies $m+2jm-n-2j_n \geq 2$, contradicting the decomposition of $N$ being almost centered.

Hence we have the the decomposition of $M\otimes_k N$ is totally ordered. It remains to show that the decomposition is almost centered, i.e. that $$|d_{\alpha}+m-2h_{\alpha}-1+2(i_{\alpha}+h_{\alpha})-(d_{\beta}+n-2h_{\beta}-1)-2(i_{\beta}+h_{\beta}+j)|\leq 1.$$ However,
\begin{eqnarray*}
&&|d_{\alpha}+m-2h_{\alpha}-1+2(i_{\alpha}+h_{\alpha})-(d_{\beta}+n-2h_{\beta}-1)-2(i_{\beta}+h_{\beta}+j)|\\
&&=|d_{\alpha}+2i_{\alpha}-d_{\beta}-2i_{\beta}+m-n-2j|\\
&&=|m-n-2j|\leq 1.
\end{eqnarray*}
The last equality holds because the Hilbert series of $M$ is symmetric which implies $d_{\alpha}+2i_{\alpha}-d_{\beta}-2i_{\beta}=0$. The last inequality holds because $N$ is almost centered.
\end{proof}

\begin{corollary}\label{symmetrictensor}
Let $l \in R_1$ and $y\in R_1$ be strong Lefschetz elements for the $R$-modules $M$ and $N$ respectively. If the Hilbert functions of $M$ and $N$ are symmetric, then $M\otimes_k N$ has the strong Lefschetz property and a symmetric Hilbert function.
\end{corollary}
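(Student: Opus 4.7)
The plan is to reduce the strong Lefschetz claim to Theorem \ref{halmostclosedundertensor} and derive the symmetry of the Hilbert function by a direct Hilbert-series computation. Theorem \ref{halmostclosedundertensor} requires one factor to have a symmetric Hilbert function and the other to have an almost centered decomposition, so the key preliminary is to show that if $N$ has the strong Lefschetz property with a symmetric Hilbert function, then the decomposition of $N$ as a $k[y]$-module is in fact centered (and hence almost centered).

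First I would establish this preliminary fact. Lemma \ref{totallyordered} gives a totally ordered (nested) decomposition $N \cong \bigoplus_\alpha k[y](-i_\alpha)/(y^{d_\alpha})$, and Remark \ref{largestelement} identifies the largest summand as $k[y]/(y^{q+1})$, where $q$ is the socle degree of $N$. This summand has Hilbert function $1 + t + \cdots + t^q$, symmetric about $q/2$. Peeling it off leaves a nested decomposition whose total Hilbert function is still symmetric about $q/2$; the next-largest summand is supported on an interval $[i_\beta, i_\beta + d_\beta - 1] \subseteq [0,q]$, and by induction on the number of summands this interval must itself be symmetric about $q/2$, forcing $2 i_\beta + d_\beta = q + 1$. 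Iterating gives $2 i_\alpha + d_\alpha = q + 1$ for every $\alpha$, equivalently $L_{\alpha\beta} = R_{\alpha\beta}$ for all comparable pairs, so the decomposition is centered.

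With the preliminary in hand, Theorem \ref{halmostclosedundertensor} applies directly to $M$ and $N$, producing the strong Lefschetz property on $M \otimes_k N$ (with strong Lefschetz element $l + y$). For the symmetry of the Hilbert function, I would use the multiplicativity $H_{M \otimes_k N}(t) = H_M(t) \, H_N(t)$ valid for tensor products over a field. Writing $p$ and $q$ for the socle degrees of $M$ and $N$, the hypotheses give $H_M(t) = t^p H_M(t^{-1})$ and $H_N(t) = t^q H_N(t^{-1})$, which multiply to $H_{M \otimes_k N}(t) = t^{p+q} H_{M \otimes_k N}(t^{-1})$. Hence the Hilbert function of $M \otimes_k N$ is symmetric about $(p+q)/2$.

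The main obstacle is the preliminary claim that a strong Lefschetz module with symmetric Hilbert function has a centered decomposition. The remark following Definition \ref{almostcentered} proves only the reverse implication, and although the proof of Theorem \ref{halmostclosedundertensor} silently uses the needed converse, it is worth making the peeling-off induction explicit. Once this is done, the rest of the argument is just an application of the previous theorem and an elementary manipulation of generating functions.
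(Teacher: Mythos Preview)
Your argument is correct and largely parallels the paper's: both invoke Theorem~\ref{halmostclosedundertensor} for the strong Lefschetz property, after first observing that a strong Lefschetz module with symmetric Hilbert function has a centered (hence almost centered) decomposition. You are right that this converse direction is used in the paper without being proved; your peeling-off induction fills that gap cleanly.

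Where you diverge is in the proof of symmetry. The paper continues working inside the explicit $k[l+y]$-decomposition of $M\otimes_k N$ and checks directly that every pair of summands satisfies $|L-R|=0$, so the tensor product is itself centered and hence has symmetric Hilbert function. Your route via $H_{M\otimes_k N}(t)=H_M(t)\,H_N(t)$ and palindromicity is shorter and entirely elementary; it sidesteps the decomposition altogether. The paper's computation buys a little more---it shows the decomposition of $M\otimes_k N$ is centered, not merely that its Hilbert function is symmetric---but by your own preliminary lemma these two conclusions are equivalent once the strong Lefschetz property is in hand, so nothing is lost.
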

\begin{proof}
Theorem \ref{halmostclosedundertensor} tells us that $M\otimes_kN$ has the strong Lefschetz property and is almost centered. We may again assume that $N\cong k[y]/(y^m)\bigoplus k[y](-j)/(y^n)$ with $0\leq j$ and $n+j\leq m$. As in Theorem \ref{halmostclosedundertensor} with $z=l+y$ and $A=k[z]$ we have:
$$M\otimes k[y](-j_m)/(y^m) \cong \bigoplus_{\alpha \in \Lambda} \bigoplus_{h_{\alpha}=0}^{\min\{d_{\alpha},m\}} A(-(i_{\alpha}+h_{\alpha}+j_m))/(z^{d_{\alpha}+m-2h_{\alpha}-1}),$$ and $$M\otimes k[y](-j_n)/(y^n) \cong \bigoplus_{\beta \in \Lambda} \bigoplus_{h_{\beta}=0}^{\min\{d_{\beta},n\}} A(-(i_{\beta}+h_{\beta}+j_n))/(z^{d_{\beta}+n-2h_{\beta}-1}).$$

For all $\alpha$, $\beta$, $0\leq h_{\alpha} \leq m$ and $0 \leq h_{\beta} \leq n$, \begin{eqnarray*}
&&|d_{\alpha}+m-2h_{\alpha}-1+2(i_{\alpha}+h_{\alpha})-(d_{\beta}+n-2h_{\beta}-1)-2(i_{\beta}+h_{\beta}+j)|\\
&&=|d_{\alpha}+2i_{\alpha}-d_{\beta}-2i_{\beta}+m-n-2j|=0,
\end{eqnarray*} where the final equality holds because the Hilbert functions of $M$ and $N$ are symmetric. Hence the Hilbert function of $M\otimes_k N$ is symmetric.
\end{proof}

Before continuing with Theorem \ref{class} mentioned in the introduction we list several lemmas we use repeatedly.

\begin{lemma}\label{pickalpha}
Let $M$ be a finite length graded $R$-module with the strong Lefschetz property and with Hilbert series $H_M(t)=\sum_{i=0}^ph_it^i$. Then 
\begin{itemize}
\item[(i)] $h_i<h_j$ with $i<j$ if and only if there exists $\alpha \in \Lambda$ with $i_{\alpha}>i$ and $d_{\alpha}+i_{\alpha}-1 \geq j$,
\item[(ii)] $h_i<h_j$ with $j<i$ if and only if there exists $\alpha \in \Lambda$ with $i_{\alpha}\leq j$ and $d_{\alpha}+i_{\alpha}-1<i$.
\end{itemize}
\end{lemma}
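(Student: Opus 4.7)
The plan is to leverage Lemma \ref{totallyordered}: since \( l \) is a strong Lefschetz element for \( M \), the summands \( \{S_\alpha\}_{\alpha \in \Lambda} \) are totally ordered under \( \preceq \). Arranging them in decreasing order \( S_{\alpha_1} \succeq S_{\alpha_2} \succeq \cdots \succeq S_{\alpha_k} \), the definition of \( \preceq \) translates to the statement that the support intervals \( I_r := [i_{\alpha_r},\, d_{\alpha_r}+i_{\alpha_r}-1] \)---the sets of degrees where \( S_{\alpha_r} \) is nonzero---form a nested sequence \( I_1 \supseteq I_2 \supseteq \cdots \supseteq I_k \). Because \( h_d \) counts summands whose support contains \( d \), and nesting implies that \( d \in I_r \) forces \( d \in I_s \) for every \( s \leq r \), the set \( T_d := \{r : d \in I_r\} \) is always an initial segment \( \{1, 2, \dots, h_d\} \) of the chain.

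For part (i), the decisive observation is that \( T_i \) and \( T_j \) are both initial segments of the same total order, so one contains the other; thus \( h_i < h_j \) is equivalent to \( T_j \setminus T_i \neq \emptyset \). An element \( \alpha \) of \( T_j \setminus T_i \) is a summand whose support contains \( j \) but not \( i \). For \( i < j \), an interval \( I_\alpha \) containing \( j \) but missing \( i \) forces \( i_\alpha > i \), while containment of \( j \) gives \( i_\alpha \leq j \leq d_\alpha+i_\alpha-1 \); in particular \( d_\alpha+i_\alpha-1 \geq j \), which is the stated condition. Conversely, any such \( \alpha \) (with \( i_\alpha \leq j \) so that the support does cover \( j \)) lies in \( T_j \setminus T_i \), yielding the inclusion \( T_i \subseteq T_j \) with \( T_i \neq T_j \).

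Part (ii) is handled by the mirror argument. For \( j < i \), an element of \( T_j \setminus T_i \) is a summand whose support contains \( j \) but lies entirely to the left of \( i \); this translates to \( i_\alpha \leq j \leq d_\alpha+i_\alpha-1 < i \), which is the stated condition. Once again, the comparability of \( T_i \) and \( T_j \) as initial segments gives that \( h_i < h_j \) if and only if such an \( \alpha \) exists.

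The main obstacle is conceptual rather than technical: recognizing that \( \preceq \) is exactly the relation of nested containment of support intervals, which is what forces each \( T_d \) to be an initial segment in the chain and hence any two \( T_d \)'s to be comparable. Once this picture is in hand, both directions of each equivalence reduce to straightforward interval-inclusion bookkeeping on the endpoints \( i_\alpha \) and \( d_\alpha+i_\alpha-1 \); no estimation, induction, or delicate degree counting is required.
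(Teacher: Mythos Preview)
Your proof is correct and follows the same approach as the paper: both arguments identify $h_i < h_j$ with the existence of a summand whose support interval contains $j$ but not $i$, and then read off the endpoint conditions on $i_\alpha$ and $d_\alpha+i_\alpha-1$. Your version is more explicit---you spell out via Lemma~\ref{totallyordered} why the sets $T_d$ are nested initial segments of the chain (which is precisely what justifies the ``if and only if''), whereas the paper compresses all of this into a single sentence; you also correctly flag in your parenthetical that the converse in (i) needs $i_\alpha \leq j$ so that $\alpha$ actually lies in $T_j$, a point the paper leaves implicit.
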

\begin{proof}
We prove both cases simultaneously. We have $h_i<h_j$ with $i<j$ (respectively $j<i$) if and only if there exists $\alpha \in \Lambda$ that contributes to degree $j$ and not to degree $i$ if and only if $i_{\alpha}>i$ (respectively $i_{\alpha}\leq j$) and $d_{\alpha}+i_{\alpha}-1 \geq j$ (respectively $d_{\alpha}+i_{\alpha}-1<i$).
\end{proof}

\begin{lemma}\label{bounds}
Let $M$ be a finite length graded $R$-module with the strong Lefschetz property. If $M\otimes_k k[y]/(y^m)$ has the strong Lefschetz property for all $m\geq0$, then for all $\alpha \in \Lambda$, $p\leq d_{\alpha}+2i_{\alpha} \leq p+2$.
\end{lemma}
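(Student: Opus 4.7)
The plan is to combine Theorem \ref{extends} with the identification of the largest element of $\{S_\alpha\}_{\alpha\in\Lambda}$ from Remark \ref{largestelement}, and then read off the bound directly from the almost centered condition applied to the pair $(\alpha_0,\alpha)$, where $\alpha_0$ indexes this largest summand.

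First I would invoke Theorem \ref{extends}: since $M\otimes_k k[y]/(y^m)$ has the strong Lefschetz property for every $m\geq 0$, the decomposition $M\cong\bigoplus_{\alpha\in\Lambda}S_\alpha$ is almost centered. In particular, $\{S_\alpha\}_{\alpha\in\Lambda}$ is totally ordered under $\preceq$ (this already follows from Lemma \ref{totallyordered}), so by Remark \ref{largestelement} there is a unique largest element $S_{\alpha_0}$. Because $M_0\neq 0$ and $M_p\neq 0$ (with $M_d=0$ for $d>p$), the largest element must satisfy $i_{\alpha_0}=0$ and $d_{\alpha_0}+i_{\alpha_0}=p+1$; that is, $S_{\alpha_0}=S/(l^{p+1})$.

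Next, for any $\alpha\in\Lambda$ we have $S_\alpha\preceq S_{\alpha_0}$, and the quantities of Definition \ref{order} become
\begin{align*}
L_{\alpha_0\alpha} &= i_\alpha - i_{\alpha_0} = i_\alpha,\\
R_{\alpha_0\alpha} &= (d_{\alpha_0}+i_{\alpha_0}) - (d_\alpha + i_\alpha) = p+1 - d_\alpha - i_\alpha.
\end{align*}
The almost centered condition $|L_{\alpha_0\alpha}-R_{\alpha_0\alpha}|\leq 1$ then reads
\[
|d_\alpha + 2i_\alpha - (p+1)|\leq 1,
\]
which is exactly the desired inequality $p\leq d_\alpha + 2i_\alpha \leq p+2$.

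There is essentially no obstacle here beyond verifying the identification of the largest summand as $S/(l^{p+1})$; this is the only step requiring care, since it depends on the convention in Setting \ref{2.1} that $M_0$ is the first nonzero degree together with the definition of $p$ as the socle degree. Everything else is a direct substitution into the almost centered inequality.
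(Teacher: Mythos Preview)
Your proposal is correct and follows exactly the paper's approach: the paper's proof is the one-line ``This follows immediately from Remark \ref{largestelement} and Theorem \ref{extends} by comparing $S/(l^{p+1})$ and $S(-i_{\alpha})/(l^{d_{\alpha}})$,'' and you have simply written out that comparison explicitly. One minor quibble: Remark \ref{largestelement} only asserts uniqueness of the largest element when $M$ is cyclic, but you do not actually need uniqueness---any largest element is $S/(l^{p+1})$, and that is all the argument uses.
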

\begin{proof}
This follows immediately from Remark \ref{largestelement} and Theorem \ref{extends} by comparing $S/(l^{p+1})$ and $S(-i_{\alpha})/(l^{d_{\alpha}})$.
\end{proof}

\begin{lemma}\label{Hlemma}
Let $l\in R_1$ be a strong Lefschetz element for $M\cong \bigoplus_{\alpha \in \Lambda}S_{\alpha}$ where $S=k[l]$, such that the decomposition of $M$ as an $S$-module is almost centered.  Then for all $0\leq i \leq \lfloor \frac{p}{2} \rfloor$, either $h_{i-1}\leq h_{p-i}\leq h_i~~\text{or}~~h_{p-i+1}\leq h_i \leq h_{p-i}$.
\end{lemma}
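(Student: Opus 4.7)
The plan is to prove the dichotomy pointwise for each fixed $i$ with $0 \leq i \leq \lfloor p/2 \rfloor$. Since $p - 2i \geq 0$ and $l$ is a strong Lefschetz element for $M$, the multiplication map $l^{p-2i}\colon M_i \to M_{p-i}$ is injective or surjective, so either $h_i \leq h_{p-i}$ or $h_{p-i} \leq h_i$. I would match these two cases to the two alternatives of the statement: in the first (``injective'') case I would aim for the second alternative $h_{p-i+1} \leq h_i \leq h_{p-i}$, and in the second (``surjective'') case for the first alternative $h_{i-1} \leq h_{p-i} \leq h_i$. Throughout, the essential tool is Lemma \ref{bounds}, which is available here because, by Theorem \ref{extends}, the almost centered hypothesis is equivalent to $M \otimes_k k[y]/(y^m)$ having the strong Lefschetz property for every $m$; thus $p \leq d_\alpha + 2i_\alpha \leq p+2$ for all $\alpha \in \Lambda$.

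In the injective case $h_i \leq h_{p-i}$, the remaining inequality to establish is $h_{p-i+1} \leq h_i$. If instead $h_i < h_{p-i+1}$, then $i < p - i + 1$ (using $i \leq \lfloor p/2 \rfloor$), and Lemma \ref{pickalpha}(i) produces $\alpha \in \Lambda$ with $i_\alpha \geq i + 1$ and $d_\alpha + i_\alpha \geq p - i + 2$; adding these yields $d_\alpha + 2i_\alpha \geq p + 3$, contradicting the upper bound of Lemma \ref{bounds}. In the surjective case $h_{p-i} \leq h_i$, the remaining inequality is $h_{i-1} \leq h_{p-i}$; this holds automatically when $i = 0$ since $h_{-1} = 0$, and for $i \geq 1$ we have $i - 1 < p - i$, so a hypothetical $h_{p-i} < h_{i-1}$ would, by Lemma \ref{pickalpha}(ii), produce $\alpha \in \Lambda$ with $i_\alpha \leq i - 1$ and $d_\alpha + i_\alpha \leq p - i$, forcing $d_\alpha + 2i_\alpha \leq p - 1$ and contradicting the lower bound of Lemma \ref{bounds}.

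The main obstacle is really just bookkeeping: selecting the correct part of Lemma \ref{pickalpha} (the indices are shifted by one between the two cases), verifying the strict comparisons $i < p - i + 1$ and $i - 1 < p - i$ throughout $0 \leq i \leq \lfloor p/2 \rfloor$, and handling the boundaries $i = 0$ and, when $p$ is even, $i = p/2$, where the two alternatives coincide or become trivial. In each boundary configuration the same bound computations either go through unchanged or reduce to a trivial $0 \leq h_\bullet$; no deeper input beyond Lemma \ref{bounds}, Lemma \ref{pickalpha}, and the total ordering of $\{S_\alpha\}$ is required.
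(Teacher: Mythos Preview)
Your proposal is correct and follows essentially the same route as the paper: in each of the two cases $h_i\le h_{p-i}$ and $h_{p-i}\le h_i$, assume the remaining inequality fails, invoke Lemma~\ref{pickalpha} to obtain a summand $S_\alpha$, and contradict the bounds $p\le d_\alpha+2i_\alpha\le p+2$ of Lemma~\ref{bounds}. Your write-up is slightly more explicit about boundary indices and about why Lemma~\ref{bounds} is available, but the argument is the same.
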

\begin{proof}
Suppose for some $i$ neither sequence of inequalities holds.

If $h_i\leq h_{p-i}$, then $h_i<h_{p-i+1}$ and Lemma \ref{pickalpha} implies there exists $\alpha \in \Lambda$ with $i_{\alpha}>i$ and $d_{\alpha}+i_{\alpha}-1\geq p-i+1$. Putting these together gives $d_{\alpha}+2i_{\alpha}>p+2$, contradicting Lemma \ref{bounds}.

If $h_{p-i}\leq h_i$, then $h_{p-i}<h_{i-1}$ and Lemma \ref{pickalpha} implies there exists $\alpha \in \Lambda$ with $i_{\alpha}\leq i-1$ and $d_{\alpha}+i_{\alpha}-1<p-i$. Putting these together gives $d_{\alpha}+2i_{\alpha}<p$ contradicting Lemma \ref{bounds}.
\end{proof}

\begin{theorem}\label{class}
Let $l\in R_1$ be a strong Lefschetz element for $M$ and $S=k[l]$. Then the following are equivalent:
\begin{itemize}
\item[(i)] $H_M(t)\in \mathcal{H}$, where $\mathcal{H}$ is as in Definition \ref{h},
\item[(ii)] $l+y\in R[y]_1$ is a strong Leschetz element of $M\otimes_k k[y]/(y^m)$ for all $m\geq0$,
\item[(iii)] The decomposition of $M$ as an $S$-module is almost centered.
\end{itemize}
\end{theorem}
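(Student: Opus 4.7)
The equivalence (ii) $\Leftrightarrow$ (iii) is already furnished by Theorem \ref{extends}, so the new content is (i) $\Leftrightarrow$ (iii); I plan to prove each implication by translating between Hilbert-function inequalities and the invariants $2i_\alpha + d_\alpha$ attached to the cyclic summands, via Lemma \ref{pickalpha}. Throughout I write $(A)$ for the first inequality chain in Definition \ref{h} and $(B)$ for the second.

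For (iii) $\Rightarrow$ (i), the identity $L_{\alpha\beta}-R_{\alpha\beta}=(2i_\beta+d_\beta)-(2i_\alpha+d_\alpha)$ lets me read the almost centered condition as a uniform bound on the pairwise differences of the values $\{2i_\alpha+d_\alpha\}_{\alpha\in\Lambda}$. Since the decomposition always contains the summand $S/(l^{p+1})$ with $2i+d=p+1$ (Remark \ref{largestelement}), this forces the entire set of values to lie either in $\{p,p+1\}$ or in $\{p+1,p+2\}$, strictly sharpening the containment in $\{p,p+1,p+2\}$ given by Lemma \ref{bounds}. Suppose the set lies in $\{p,p+1\}$ and $(A)$ failed at some $1\leq i\leq \lfloor p/2\rfloor$; then one of $h_{p-i}>h_i$ or $h_{i-1}>h_{p-i}$ would hold, and Lemma \ref{pickalpha} would produce a summand with $2i_\alpha+d_\alpha\geq p+2$ or $\leq p-1$, contradicting the window. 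Thus $(A)$ holds for every $i$, and $H_M\in\mathcal{H}$. The parallel analysis with window $\{p+1,p+2\}$ yields $(B)$.

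For (i) $\Rightarrow$ (iii), relabel so that $(A)$ holds for every $i$. I aim to show $2i_\alpha+d_\alpha\in\{p,p+1\}$ for every $\alpha\in\Lambda$. If some $\alpha_0$ satisfies $2i_{\alpha_0}+d_{\alpha_0}\geq p+2$, then $i_{\alpha_0}\geq 1$ and $d_{\alpha_0}+i_{\alpha_0}-1\geq p+1-i_{\alpha_0}$; Lemma \ref{pickalpha}(i) applied at the pair $(i_{\alpha_0}-1,\,d_{\alpha_0}+i_{\alpha_0}-1)$ yields the strict jump $h_{i_{\alpha_0}-1}<h_{d_{\alpha_0}+i_{\alpha_0}-1}$, which I then compare against the $(A)$ inequalities at $i=i_{\alpha_0}$ (and, when needed, at neighbouring indices) to drive a contradiction. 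The dual case $2i_{\alpha_0}+d_{\alpha_0}\leq p-1$ is handled symmetrically through Lemma \ref{pickalpha}(ii). The main obstacle lies in this direction: the strict Hilbert-function inequality produced by $\alpha_0$ involves indices not compared by a single instance of $(A)$, so the argument must carefully splice together adjacent instances of $(A)$, and handle the boundary values of $i_{\alpha_0}$ near $0$, $\lfloor p/2\rfloor$, or $p$, to close the gap.
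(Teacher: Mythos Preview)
Your handling of (ii) $\Leftrightarrow$ (iii) and of (iii) $\Rightarrow$ (i) is fine, and in fact your argument for (iii) $\Rightarrow$ (i) is cleaner than the paper's: observing that almost-centeredness forces the set $\{2i_\alpha+d_\alpha\}$ into one of the two windows $\{p,p+1\}$ or $\{p+1,p+2\}$, and then reading off $(A)$ or $(B)$ directly from Lemma~\ref{pickalpha}, bypasses both Lemma~\ref{Hlemma} and the minimal-index bookkeeping the paper carries out.

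The gap is in (i) $\Rightarrow$ (iii). Your intermediate target, ``$(A)$ for every $i$ forces $2i_\alpha+d_\alpha\in\{p,p+1\}$ for every $\alpha$,'' is false, so no amount of splicing or boundary analysis will complete it. Take $p=3$ and $M\cong S/(l^4)\oplus S(-1)/(l^3)$, with Hilbert vector $(1,2,2,2)$. The only instance of $(A)$ is $h_0\le h_2\le h_1$, i.e.\ $1\le 2\le 2$, so $(A)$ holds; yet the second summand has $2i+d=5=p+2$. Running your outline on this $\alpha_0$ produces only the inequality $h_0<h_3$, and $(A)$ says nothing about $h_p$; there are no neighbouring indices to invoke. (Here $(B)$ also holds and the decomposition \emph{is} almost centered, so there is no contradiction with the theorem, only with your sub-claim.) The dual example $(2,2,2,1)$ shows symmetrically that $(B)$ does not force the window $\{p+1,p+2\}$.

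What is actually true, and what you must aim for, is that $H_M\in\mathcal{H}$ rules out any \emph{pair} of summands with $|(2i_\alpha+d_\alpha)-(2i_\beta+d_\beta)|\ge 2$. A single bad summand yields only one strict Hilbert inequality, which can be absorbed by whichever of $(A)$, $(B)$ happens to hold; to contradict membership in $\mathcal{H}$ you need \emph{two} strict inequalities, one of the form $h_j<h_{p-j}$ and one of the form $h_{p-j'}<h_{j'}$ with $j,j'\in[1,\lfloor p/2\rfloor]$, so that neither $(A)$ nor $(B)$ can hold globally. This is exactly what the paper does: it fixes a minimal offending pair $S_\alpha\prec S_\beta$ and extracts both a $j$ and a $j'$ from it. Your plan, which works from a single offending summand, cannot supply the second inequality.
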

\begin{proof} The equivalence of items (ii) and (iii) is established in Theorem \ref{extends}. It suffices to establish the equivalence of items (i) and (iii). 

(iii) implies (i): Suppose the decomposition of $M$ is almost centered. Let $j=\min\{i~|~h_i\neq h_{p-i}\}$. We may assume that $h_j<h_{p-j}$. The minimality of $j$ implies that $h_i=h_{p-i}$ for all $i<j$. Therefore, whenever $i<j$, Lemma \ref{Hlemma} implies $h_{i-1}\leq h_{p-i}=h_i$ or $h_{p-i+1}\leq h_i = h_{p-i}$. However we also have that $h_{i-1}=h_{p-i+1}$, since $i-1<i<j$.  Hence both statements hold and for all $i<j$ and we have $h_{p-i+1}\leq h_i \leq h_{p-i}$ whenever $i\leq j$. So if $H_M(t)\notin {\mathcal {H}}$, then there exists $i>j$ such that $h_{p-i}<h_i$ or $h_{i-1}\leq h_{p-i}=h_i<h_{p-i+1}$.

If $h_{p-i}<h_i$, then Lemma \ref{pickalpha} implies that there exists $\alpha$ such that $i_{\alpha}\leq i$ and $d_{\alpha}+i_{\alpha}-1<p-i$. Combining these yields $d_{\alpha}+2i_{\alpha}<p+1$ and Lemma \ref{bounds} implies $d_{\alpha}+2i_{\alpha}=p+1$. We also have $h_j<h_{p-j}$ which implies that there exists $\beta$ with $i_{\beta}>j$ and $d_{\beta}+i_{\beta}-1\geq p-j$ (Lemma \ref{pickalpha}). Together these give $d_{\beta}+2i_{\beta}>p+1$. Lemma \ref{bounds} implies $d_{\beta}+2i_{\beta}=p+2$. Combining everything gives $|L_{\alpha \beta}-R_{\alpha \beta}|=2$, contradicting the assumption that the decomposition of $M$ is almost centered.

If $h_{i-1}\leq h_{p-i}=h_i<h_{p-i+1}$, then Lemma \ref{pickalpha} gives an $\alpha$ such that $i_{\alpha} >i$ and $d_{\alpha}+i_{\alpha}-1 \geq p-i+1$. Combining these gives $d_{\alpha}+2i_{\alpha}>p+2$, contradicting Lemma \ref{bounds}.

(i) implies (iii): Suppose that the Hilbert series of $M$ is in $\mathcal{H}$ and asuume that the decomposition of $M$ is not almost centered. Choose $\alpha$ and $\beta$ in $\Lambda$ minimally  so that $S_{\alpha}<S_{\beta}$ and $|d_{\beta}+2i_{\beta}-d_{\alpha}-2i_{\alpha}|\geq2$. Here minimally means that for all $\gamma$ and $\delta$ in $\Lambda$ with $S_{\alpha}<S_{\gamma}$ and $S_{\alpha}<S_{\delta}$, $|d_{\gamma}+2i_{\gamma}-d_{\delta}-2i_{\delta}|\leq 1$ and for all $\gamma$ in $\Lambda$ with $S_{\beta}<S_{\gamma}$, $|d_{\gamma}+2i_{\gamma}-d_{\alpha}-2i_{\alpha}|\leq 1$.

We will break this up into the cases where $d_{\beta}+2i_{\beta}-d_{\alpha}-2i_{\alpha}\geq2$ and\\ $d_{\beta}+2i_{\beta}-d_{\alpha}-2i_{\alpha}\leq-2$.

If $d_{\beta}+2i_{\beta}-d_{\alpha}-2i_{\alpha}\leq-2$, let $j=p-d_{\alpha}-i_{\alpha}+1$. Rewriting $j$, we see that 

\begin{eqnarray*}
j&=&(p+1-d_{\beta}-2i_{\beta})+(d_{\beta}+i_{\beta}-d_{\alpha}-i_{\alpha})+i_{\beta} \\
&\leq&1+(d_{\beta}+i_{\beta}-d_{\alpha}-i_{\alpha})+i_{\beta} \\
&\leq&1+i_{\alpha}-i_{\beta}-2+i_{\beta} \\
&=& i_{\alpha}-1.
\end{eqnarray*}
The first inequality follows by comparing $S_{\beta}$ with $S/(l^{p+1})$ and utilizing the minimality of $\alpha$ and $\beta$ (i.e. $|p+1-d_{\beta}-2i_{\beta}|\leq1)$. The second inequality follows because by hypothesis $d_{\beta}+2i_{\beta}-d_{\alpha}-2i_{\alpha}\leq-2$, which after rearranging gives $d_{\beta}+i_{\beta}-d_{\alpha}-i_{\alpha}\leq i_{\alpha}-i_{\beta}-2$. Now, $j\leq i_{\alpha}-1$, and $p-j=d_{\alpha}+i_{\alpha}-1$, so Lemma \ref{pickalpha} gives $h_j<h_{p-j}$.\\

Let $j'=i_{\beta}$. Rewriting $p-j'=p-i_{\beta}$, we see that

\begin{eqnarray*}
p-j'&=& (p+1-d_{\alpha}-2i_{\alpha})+d_{\alpha}+i_{\alpha}-1+(i_{\alpha}-i_{\beta}) \\
&\geq& -1+d_{\alpha}+i_{\alpha}-1+(i_{\alpha}-i_{\beta}) \\
&\geq& d_{\alpha}+i_{\alpha}-2+d_{\beta}+i_{\beta}-d_{\alpha}-i_{\alpha}+2\\
&=&d_{\beta}+i_{\beta}.
\end{eqnarray*}
The first inequality follows by comparing $S_{\alpha}$ with $S/(l^{p+1})$ and utilizing the minimality of $\alpha$ and $\beta$ (i.e. $|p+1-d_{\alpha}-2i_{\alpha}|\leq1$). The second inequality follows because by hypothesis $d_{\beta}+2i_{\beta}-d_{\alpha}-2i_{\alpha}\leq-2$, which after rearranging gives $i_{\beta}-i_{\alpha}\geq d_{\beta}+i_{\beta}-d_{\alpha}-i_{\alpha}+2$. Now $j'=i_{\beta}$ and $p-j'\geq d_{\beta}+i_{\beta}$, therefore $h_{p-j'}<h_{j'}$ by Lemma \ref{pickalpha}.

Finally we need to observe that $1\leq j \leq \lfloor \frac{p}{2} \rfloor$ and $1\leq j' \leq \lfloor \frac{p}{2} \rfloor$. This follows from the observation that $j=i_{\alpha}-1<d_{\alpha}+i_{\alpha}-1=p-j$ and $j'=i_{\beta}<d_{\beta}+i_{\beta}=p-j'$. Thus $H_M(t)\notin {\mathcal{H}}$, a contradiction.

If $d_{\beta}+2i_{\beta}-d_{\alpha}-2i_{\alpha}\geq2$, let $j=i_{\alpha}$. Rewriting $p-j$, we see that

\begin{eqnarray*}
p-j&=&(p+1-d_{\beta}-2i_{\beta})+(i_{\beta}-i_{\alpha})+d_{\beta}+i_{\beta}-1 \\
&\geq& -1+(i_{\beta}-i_{\alpha})+d_{\beta}+i_{\beta}-1 \\
&\geq& -2+d_{\alpha}+i_{\alpha}-d_{\beta}-i_{\beta}+2+d_{\beta}-i_{\beta} \\
&=& d_{\alpha}+i_{\alpha}.
\end{eqnarray*}
The inequalities hold for reasons similar to those discussed in the previous case. We now have that $j=i_{\alpha}$ and $p-j\geq d_{\alpha}+i_{\alpha}$ which implies that $h_{p-j}<h_j$ (Lemma \ref{pickalpha}).\\

Let $j'=p-d_{\beta}-i_{\beta}+1$. Rewriting, we see
\begin{eqnarray*}
j'&=& (p+1-d_{\alpha}-2i_{\alpha})+(d_{\alpha}+i_{\alpha}-d_{\beta}-i_{\beta})+i_{\alpha}\\
&\leq&1+(d_{\alpha}+i_{\alpha}-d_{\beta}-i_{\beta})+i_{\alpha}\\
&\leq& 1+i_{\beta}-i_{\alpha}-2+i_{\alpha}\\
&=&i_{\beta}-1.
\end{eqnarray*}
Hence $j'\leq i_{\beta}-1$ and $h_{p-j'}=d_{\beta}+i_{\beta}-1$, giving that $h_j'<h_{p-j'}$ (Lemma \ref{pickalpha}). As in the last case it is forced that $1\leq j \leq \lfloor \frac{p}{2} \rfloor$ and $1\leq j' \leq \lfloor \frac{p}{2} \rfloor$. Hence  $H_M(t)\notin {\mathcal{H}}$, a contradiction.
\end{proof}

\section{Examples and Applications}

Let $M$ be a graded $R$-module, $l\in R_1$ (not necessarily a strong Lefschetz element for $M$), and set $S=k[l]$. We will use the following diagram to analyze the decomposition of $M$ as a graded $S$-module where $H_M(t)=\sum_{i=o}^ph_it^i$. $$\begin{tabular}{c|c|c|c} $M$& $h_0$ & $\cdots$ & $h_p$ \\\hline  $S(-i_{\alpha})/(l^{d_{\alpha}})$ & $\dim \left[S(-i_{\alpha})/(l^{d_{\alpha}})\right]_0$ & $\cdots$ & $\dim \left[S(-i_{\alpha})/(l^{d_{\alpha}})\right]_p$ \\\hline $\vdots$  & $\vdots$ & $\cdots$ & $\vdots$ \\ \end{tabular}$$ The rows of the diagram are ordered lexicographically. The diagram allows us to tell at a glance whether or not the decomposition is totally ordered or almost centered as in Definitions \ref{order} and \ref{almostcentered} respectively.  If the decomposition is totally ordered than when we look at any two rows, say corresponding to $S_{\alpha}$ and $S_{\beta}$, it will be the case that either $\dim [S_{\alpha}]_i\leq \dim [S_{\beta}]_i$ for all $i$ or $\dim [S_{\beta}]_i\leq \dim [S_{\alpha}]_i$ for all $i$.

\begin{example}
Given $R=k[x_1,x_2]$ and $I=(x_1^2,x_1x_2,x_2^5)$, set $M=R/I$. Let $l=x_2$ and set $S=k[l]$, then $M\cong_{S} S/(l^5)\bigoplus S(-1)/(l)$. The diagram corresponding to the decomposition of $M$ as a graded $S$-module is $$ \begin{tabular}{c|c|c|c|c|c}  & 1 & 2 & 1 & 1 & 1  \\\hline $S/(l^5)$ & 1 & 1 & 1 & 1 & 1  \\\hline $S(-1)/(l)$ & 0 & 1 &  0 & 0 & 0  \\ \end{tabular}.$$Looking at the diagram we can see easily that it is totally ordered, because\\ $\dim [S(-1)/(l)]_i \leq \dim [S/(l^5)]_i$ for all $i$.

If we define $S_{\alpha}:=S/(l^5)$ and $S_{\beta}:=S(-1)/(l)$, then the diagram also makes clear that $|L_{\alpha \beta}-R_{\alpha \beta}|=2$ in this case. Hence the decomposition is not almost centered.  Thus there exists some $m$ such that $M\otimes_k k[y]/(y^m)$ does not have the strong Lefschetz property. For example, if $m=3$, then $M\otimes_kk[y]/(y^3)\cong R[y]/(x_1^2,x_1x_2,x_2^5,y^3)$ does not have the strong Lefschetz property. Let $z=x_2+y$, and $A=k[z]$, then the decomposition of $M\otimes_kk[y]/(y^3)$ as an $A$ module has the following diagram:$$\begin{tabular}{c|c|c|c|c|c|c|c}   & 1 & 3 & 4 & 4 & 3 & 2 & 1  \\\hline  $A/(z^7)$ & 1 & 1 & 1 & 1 & 1 & 1 & 1  \\\hline  $A(-1)/(z^5)$ & 0 & 1 & 1 & 1 & 1 & 1  & 0 \\\hline   $A(-1)/(z^3)$ & 0 & \bf{1} &  1 & 1 & \bf{0} & 0 & 0 \\\hline  $A(-2)/(z^3)$ & 0 & \bf{0} & 1 & 1 &  \bf{1} & 0 & 0 \\ \end{tabular}$$Looking at degrees $1$ and $4$ of the summands $A(-2)/(z^3)$ and $A(-1)/(z^3)$, we see that this decomposition is not totally ordered and hence does not have the strong Lefschetz property. It is an easy exercise to check directly that multiplication by the cube of any linear form fails to be injective or surjective from degree 1 to degree 4 to verify that $A$ fails to have the strong Lefschetz property.  It is worth noting that $A$ does however have the weak Lefschetz property (there exists a linear form $l$ so that multiplication by $l$ is always injective or surjective).
\end{example}

Corollary \ref{tensorisac} and Theorem \ref{halmostclosedundertensor} provide a way of finding more modules with the strong Leschetz property. Corollary \ref{symmetrictensor} shows that $\mathcal{H}$ is closed under tensor when both of the modules have a symmetric Hilbert function.  However, when one module doesn't have a symmetric Hilbert function, the tensor product does not necessarily have the strong Lefschetz property

\begin{example}
Let $R=k[x,y]$, $I=(x^3, x^2y,xy^2,y^4)\subset R$, $J=(x^3,xy,y^4)\subset R$, $M=R/I$ and $N=R/J$. Let $S=k[y]$ and decompose both $M$ and $N$ as graded $S$-modules. The corresponding diagrams show that $y$ is a strong Lefschetz element for both $M$ and $N$.
$$\begin{tabular}{c|c|c|c|c}M & 1 & 2 & 3 & 1 \\\hline $S/(y^4)$ & 1 & 1 & 1 & 1 \\\hline $S(-1)/(y^2)$ & 0 & 1 & 1 & 0 \\\hline $S(-2)/(y)$ & 0 & 0 & 1 & 0\end{tabular}\hspace{2cm} \begin{tabular}{c|c|c|c|c}N & 1 & 2 & 1 & 1 \\\hline $S/(y^4)$ & 1 & 1 & 1 & 1 \\\hline $S(-1)/(y)$ & 0 & 1 & 0 & 0\end{tabular}$$ The decomposition for $M\otimes_kN$ involves the following two rows, where the bolded entries show that $M\otimes_kN$ does not have the strong Lefschetz property: $$\begin{tabular}{c|c|c|c|c|c|c} 0 & \bf{1} & 1 & 1 & 1 & \bf{0} & 0 \\\hline   0 &\bf{0} & 1 & 1 & 1 & \bf{1} & 0 \end{tabular}$$
\end{example}

\begin{remark}\label{monomialcompleteintersection} If $I=(x_1^{a_1},\dots,x_n^{a_n})\subset R=k[x_1,\dots,x_n]$ is a monomial complete intersection, then Reid-Roberts-Roitmann \cite {RRR}, Stanley \cite{S} and Watanabe \cite{W} show that $R/I$ has the strong Lefschetz property. In this case, $R/I\cong \bigotimes_{i=1}^n k[x_i]/(x_i^{a_i})$ and $R/I$ having the strong Lefschetz property also follows from Corollary \ref{symmetrictensor}. This provides as a corollary to our work another purely algebraic proof that monomial complete intersections have the strong Lefschetz property.
\end{remark}

We are also able to use Corollary \ref{symmetrictensor} to say something interesting about the tensor product of cyclic modules when $\dim R =2$.

\begin{corollary} Let $\{I_1,...,I_s\}$ be a set of Artinian ideals in $R=k[x,y]$ such that $R/I_i$ has a symmetric Hilbert function for all $1\leq i\leq s$. Then the tensor product, $\bigotimes_{i=1}^sR/I_i$ has the strong Lefschetz property and a symmetric Hilbert function.
\end{corollary}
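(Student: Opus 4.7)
The plan is induction on $s$, with Corollary~\ref{symmetrictensor} as the engine: that corollary says the tensor product over $k$ of two modules with the strong Lefschetz property and symmetric Hilbert functions again has both properties, so an iterated application will chain these properties through all $s$ factors, provided the induction gets off the ground.

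For the base case $s=1$, the module $R/I_1$ has a symmetric Hilbert function by hypothesis, and what remains is to verify the strong Lefschetz property. Here I would appeal to the classical fact that every standard graded Artinian quotient of $k[x,y]$ in characteristic zero has the strong Lefschetz property; this is the only external input the proof requires.

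For the inductive step, assume $M := \bigotimes_{i=1}^{s-1} R/I_i$ has the strong Lefschetz property and a symmetric Hilbert function, and set $N := R/I_s$. By hypothesis $N$ has a symmetric Hilbert function, and by the codimension~$2$ fact invoked in the base case, $N$ also has the strong Lefschetz property. Applying Corollary~\ref{symmetrictensor} to $M$ and $N$ yields that $\bigotimes_{i=1}^{s} R/I_i \cong M \otimes_k N$ has the strong Lefschetz property and a symmetric Hilbert function, completing the induction. The main obstacle is precisely the base case: one must accept as external input that symmetric (in fact arbitrary) Hilbert functions in codimension~$2$ in characteristic zero are realized by algebras with SLP. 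Once that fact is in hand, the argument is a straightforward iteration of Corollary~\ref{symmetrictensor}, and no additional analysis of the almost centered decomposition is needed beyond what is already encoded there.
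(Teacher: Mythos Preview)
Your proposal is correct and follows essentially the same approach as the paper: the paper's proof cites \cite[Proposition 4.4]{HMNW} for the fact that every Artinian quotient of $k[x,y]$ has the strong Lefschetz property (precisely the external input you identified), and then invokes Corollary~\ref{symmetrictensor} to conclude. You have simply made the implicit induction explicit.
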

\begin{proof}
In \cite[Proposition 4.4]{HMNW} the authors prove that every Artinian ideal in $k[x,y]$ has the strong Lefschetz property. Hence Corollary \ref{symmetrictensor} implies that $\bigotimes_{i=1}^sR/I_i$ has the strong Lefschetz property and a symmetric Hilbert function.
\end{proof}

\section{Positive Characteristic}

Most work on the strong Lefschetz property, as well as on the weak Lefschetz property, has the assumption that the characteristic of the ground field is zero. One reason for such an assumption is that there are simple examples of rings that don't have the strong Lefschetz property when the ground field has positive characteristic.

\begin{example}
Let $R=k[x,y]/(x^p,y^p)$ where $char(k)=p > 0$. If $l$ is any linear form in $R$, then $l^p=0$  Hence $l^p:R_0\to R_p$ is neither injective or surjective showing that $R$ does not have the strong Lefschetz property.
\end{example}

There has been some recent work studying the weak Lefschetz property in the case where the ground field has positive characteristic. In \cite{MMN}, Miglore, Mir\'o-Roig, and Nagel explore the relationship between the weak Lefschetz property and the characteristic of the ground field. In their recent preprint \cite{CN}, Cook and Nagel give several families of examples of rings that have the weak Lefschetz property if the characteristic of the ground field is sufficiently large.

Extensions of some results in this paper to the case where the characteristic of the ground field is positive can be easily seen with the help of the following lemma.

\begin{lemma}\label{charp}
Let $I=(x^a,y^b)\subset R=k[x,y]$ and set $M=R/(x^a,y^b)$ with $\text{char}(k)=p>a+b-2$. Then $M$ has the strong Lefschetz property.
\end{lemma}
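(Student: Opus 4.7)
The plan is to prove that $l = x + y$ is a strong Lefschetz element for $M$, by imitating the classical characteristic zero argument (see Stanley \cite{S}, Watanabe \cite{W}, or Reid-Roberts-Roitman \cite{RRR}) and verifying that the hypothesis $p > a+b-2$ is precisely what is needed to keep every binomial coefficient that appears in the computation nonzero in $k$.

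First, I would note that $M$ is a complete intersection with socle degree $a+b-2$ and a symmetric unimodal Hilbert function. Hence it suffices to show that the ``central'' multiplication maps $l^{a+b-2-2i} : M_i \to M_{a+b-2-i}$ are isomorphisms for every $0 \le i \le \lfloor (a+b-2)/2 \rfloor$: the remaining Lefschetz conditions --- injectivity or surjectivity of $l^d : M_i \to M_{i+d}$ for arbitrary $d$ --- then follow by factoring through an appropriate central map together with the Hilbert function symmetry $h_i = h_{a+b-2-i}$.

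Second, I would compute the matrix of each central map in the monomial basis $\{x^j y^{i-j}\}$ of $M_i$. Because $l^d = \sum_{k=0}^d \binom{d}{k} x^k y^{d-k}$ with $d = a+b-2-2i \le a+b-2 < p$, every matrix entry is either zero or a binomial coefficient $\binom{n}{k}$ with $n < p$; by Lucas's theorem all such binomials are nonzero in $k$ (no carries occur in base $p$ once $n < p$). Running the explicit row-reduction argument of Reid-Roberts-Roitman \cite{RRR}, the pivots are themselves binomial coefficients $\binom{n}{k}$ with $n \le a+b-2$, so the determinant of the central matrix is a product of units in $k$, and the map is an isomorphism.

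The main obstacle is the careful bookkeeping in this last step: one must confirm that the characteristic zero proof of invertibility does not secretly introduce any integer factor with a prime divisor exceeding $a+b-2$. In the Reid-Roberts-Roitman approach this is essentially automatic, since every pivot cleared in the row reduction is a binomial coefficient $\binom{n}{k}$ with $n \le a+b-2$; alternatively, one could invoke Watanabe's $\mathfrak{sl}_2$ proof \cite{W}, since all irreducible $\mathfrak{sl}_2$-summands of $M$ have highest weight at most $a+b-2 < p$, and in this weight range the modular representation theory of $\mathfrak{sl}_2$ replicates the characteristic zero decomposition verbatim.
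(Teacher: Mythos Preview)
Your proposal is essentially sound and takes a genuinely different route from the paper's proof. The paper does not touch $l=x+y$ at all: it passes to $\mathrm{Gin}_{\mathrm{revlex}}(I)$, observes that under the hypothesis $p>a+b-2$ the Borel-fixed condition upgrades to strongly stable (hence lex in two variables), and then compares Hilbert functions of $J+(y^a)$ for this lex ideal with the corresponding char-zero lex ideal, importing the SLP from \cite{HMNW}. This yields only a \emph{generic} Lefschetz element. Your approach is more explicit --- it names $x+y$ as the Lefschetz element and works directly with the central maps --- which dovetails better with how the lemma is actually used downstream (the corollary needs $l+y$ to be the Lefschetz element on each summand $k[l,y]/(l^a,y^b)$).

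That said, one step in your sketch is under-argued. Knowing that every \emph{entry} of the central matrix is a unit binomial $\binom{n}{k}$ with $n<p$ does not by itself make the \emph{determinant} a unit, and the sentence ``the pivots are themselves binomial coefficients $\binom{n}{k}$ with $n\le a+b-2$'' does not accurately describe a naive Gaussian elimination on the Toeplitz matrix $\bigl(\binom{d}{c+j'-j}\bigr)$: after one elimination step the entries are no longer single binomials. You need either (i) a product formula for this determinant showing its prime factors are all at most $a+b-2$ (such formulas exist via Lindstr\"om--Gessel--Viennot / plane-partition counts, but you should state one), or (ii) the $\mathfrak{sl}_2$ route made precise: the tensor product $V_{a-1}\otimes V_{b-1}$ has all weights in $(-p,p)$ since $a+b-2<p$, and in this range the Weyl modules are simple and Clebsch--Gordan holds exactly as in characteristic zero (all summands have highest weight in the lowest $p$-alcove). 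The latter is the cleaner fix; just make the appeal to the modular $\mathfrak{sl}_2$ fact explicit rather than asserting it ``replicates verbatim.''
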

\begin{proof}
It suffices to show that $R/\text{Gin}_{\text{revlex}}(I)$ has the strong Lefschetz property. This is because the Hilbert functions of $I+l^a$ and $\text{Gin}_{\text{revlex}}(I)+l^a$ are the same for all $a$ and for $l$ sufficiently general (see proof of \cite[Proposition 4.4]{HMNW}). Notice that $\text{Gin}_{\text{revlex}}(I)$ is Borel fixed. This means that for all monomial generators $m$ of $\text{Gin}_{\text{revlex}}(I)$, if $y^t$ divides $m$ but no higher power of $y$ divides $m$, then $(\frac{x}{y})^sm \in \text{Gin}_{\text{revlex}}(I)$ for all $s<t$ such that ${t \choose s}\equiv 0 \text{ mod } p$ (see \cite{P} or \cite[Proposition 15.23]{E}). Since $p>a+b-2$, this means that $(\frac{x}{y})^sm \in \text{Gin}_{\text{revlex}}(I)$ for all $s<t$. Hence $\text{Gin}_{\text{revlex}}(I)$ is a strongly stable ideal of $k[x,y]$ and therefore a lex ideal. Let $I'\subset k'[x,y]$ be the lex ideal with the same Hilbert function as $I$ and $\text{char}(k')=0$. The Hilbert functions of $I+l^a$, $\text{Gin}_{\text{revlex}}(I)+y^a$ and $I'+y^a$ are all the same. This is because the Hilbert function of monomial ideals is independent of the characteristic of the ground field. Since $\text{char}(k')=0$, we know that $k'[x,y]/I'$ has the strong Lefschetz property \cite[Proposition 4.4]{HMNW} and hence $R/\text{Gin}_{\text{revlex}}(I)$ has the strong Lefschetz property.
\end{proof}

In particular we get a positive characteristic version of Theorem \ref{halmostclosedundertensor}.

\begin{corollary}
Let $M$ and $N$ be Artinian graded $R=k[x_1,\dots,x_n]$ modules with the strong Lefschetz property. Assume that $\text{char}(k)=p>s+t$ where $s$ is the socle degree of $M$ and $t$ is the socle degree of $N$. Let $l \in R_1$ and $y\in R_1$ be strong Lefschetz elements for $M$ and $N$ respectively. If the Hilbert function of $M$ is symmetric and the decomposition of $N$ as a $k[y]$ module is almost centered, then $M\otimes_kN$ has the strong Lefschetz property.
\end{corollary}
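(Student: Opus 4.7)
The plan is to mimic the proof of Theorem~\ref{halmostclosedundertensor} verbatim, identifying the single step where characteristic zero was used and supplying Lemma~\ref{charp} as a substitute under the hypothesis $p > s+t$. Specifically, that proof decomposes $M \cong_S \bigoplus_{\alpha\in\Lambda} S(-i_\alpha)/(l^{d_\alpha})$ and each cyclic summand of $N$ as $k[y](-j)/(y^{m'})$, then invokes Example~\ref{2dimensional} to decompose each tensor piece $k[l,y](-i_\alpha-j)/(l^{d_\alpha}, y^{m'})$ as a direct sum of cyclic $k[l+y]$-modules. Everything after this decomposition is a purely combinatorial manipulation of starting and ending degrees under the partial order $\preceq$.

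The characteristic-sensitive input is exactly Example~\ref{2dimensional}, which rests on the fact that $l+y$ is a strong Lefschetz element of $k[l,y]/(l^a,y^b)$. In characteristic zero this is due to Stanley and Watanabe; Lemma~\ref{charp} gives the same conclusion in positive characteristic provided $p > a+b-2$. The remark following Theorem~\ref{PID} bounds $d_\alpha \leq s+1$ and $m' \leq t+1$, so
\[
d_\alpha + m' - 2 \leq (s+1) + (t+1) - 2 = s+t < p,
\]
and Lemma~\ref{charp} applies uniformly to every relevant piece $k[l,y]/(l^{d_\alpha}, y^{m'})$. Combined with Theorem~\ref{PID} (applied over $k[l+y]$) this yields the same explicit decomposition used in Theorem~\ref{halmostclosedundertensor}.

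Once this substitution is in place, the remainder of the argument transfers unchanged: for any two summands of $M \otimes_k N$ of the form $A(-(i_\alpha+h_\alpha+j_m))/(z^{d_\alpha+m-2h_\alpha-1})$ and $A(-(i_\beta+h_\beta+j_n))/(z^{d_\beta+n-2h_\beta-1})$, comparability under $\preceq$ follows from the symmetry of the Hilbert function of $M$, which forces $d_\alpha+2i_\alpha = d_\beta+2i_\beta$, together with the almost centered decomposition of $N$, which forces $|(m+2j_m)-(n+2j_n)| \leq 1$. Lemma~\ref{totallyordered} then concludes that $l+y$ is a strong Lefschetz element for $M\otimes_k N$.

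The main (and essentially only) obstacle is the arithmetic check that each appeal to Example~\ref{2dimensional} falls within the range of Lemma~\ref{charp}; the hypothesis $p > s+t$ is sharp, since $d_\alpha$ and $m'$ can simultaneously attain $s+1$ and $t+1$, in which case $d_\alpha + m' - 2 = s + t$ and the strict inequality is needed.
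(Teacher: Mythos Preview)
Your proposal is correct and follows the same route as the paper: decompose $M\otimes_k N$ into pieces of the form $k[l,y]/(l^a,y^b)$, invoke Lemma~\ref{charp} on each piece, and then observe that the remainder of the proof of Theorem~\ref{halmostclosedundertensor} is purely combinatorial and hence characteristic-free. Your version is actually more careful than the paper's, since you explicitly verify the bound $a+b-2\leq s+t<p$ needed to apply Lemma~\ref{charp}, whereas the paper leaves this implicit.
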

\begin{proof}
We know that $M\otimes N \cong \bigoplus k[l,y]/(l^a,y^b)$. Lemma \ref{charp} guarantees that each summand of $M\otimes N$ has the strong Lefschetz property.  It follows from the proof of Theorem \ref{halmostclosedundertensor} that each summand of $M\otimes N$ having the strong Lefschetz property implies that $M\otimes N$ has the strong Lefschetz property.
\end{proof}

\section*{Acknowledgments}
The author would like to thank Giulio Caviglia for introducing her to the strong Lefschetz property and suggesting the problem considered in this paper. The author would also like to thank the referee whose many helpful comments have improved the presentation of the paper.

\bibliographystyle{amsplain}

\end{document}